\newcommand{\BP}{\ensuremath{\EuScript P}}
\renewcommand{\leq}{\ensuremath{\leqslant}}
\renewcommand{\geq}{\ensuremath{\geqslant}}
\newcommand{\Frac}[2]{\displaystyle{\frac{#1}{#2}}} 
\newcommand{\scal}[2]{{\langle{{#1}\mid{#2}}\rangle}}
\newcommand{\menge}[2]{\big\{{#1}~\big |~{#2}\big\}} 
\newcommand{\KKK}{\ensuremath{\boldsymbol{\mathcal K}}}
\newcommand{\HHH}{\ensuremath{\boldsymbol{\mathcal H}}}
\newcommand{\GGG}{\ensuremath{{\boldsymbol{\mathcal G}}}}
\newcommand{\HH}{\ensuremath{{\mathcal H}}}
\newcommand{\GG}{\ensuremath{{\mathcal G}}}
\newcommand{\Sum}{\ensuremath{\displaystyle\sum}}
\newcommand{\emp}{\ensuremath{{\varnothing}}}
\newcommand{\Id}{\ensuremath{\operatorname{Id}}}
\newcommand{\RP}{\ensuremath{\left[0,+\infty\right[}}
\newcommand{\BL}{\ensuremath{\EuScript B}\,}
\newcommand{\SL}{\ensuremath{\EuScript S}\,}
\newcommand{\RPP}{\ensuremath{\left]0,+\infty\right[}}
\newcommand{\NN}{\ensuremath{\mathbb N}}
\newcommand{\weakly}{\ensuremath{\:\rightharpoonup\:}}
\newcommand{\exi}{\ensuremath{\exists\,}}
\newcommand{\ran}{\ensuremath{\text{\rm ran}\,}}
\newcommand{\zer}{\ensuremath{\text{\rm zer}\,}}
\newcommand{\pinf}{\ensuremath{{+\infty}}}
\newcommand{\dom}{\ensuremath{\text{\rm dom}\ }}
\newcommand{\Fix}{\ensuremath{\text{\rm Fix}\,}}
\newcommand{\gra}{\ensuremath{\text{\rm gra}\,}}
\newcommand{\inte}{\ensuremath{\text{\rm int }}}
\newcommand{\infconv}{\ensuremath{\mbox{\small$\,\square\,$}}}
\newcommand{\zeroun}{\ensuremath{\left]0,1\right[}}   
\newcommand{\rzeroun}{\ensuremath{\left]0,1\right]}}
\begin{document}

\title{
Variable metric algorithms driven by averaged operators
}
\titlerunning{Variable metric algorithms driven by averaged operators.}
\author{Lilian E. Glaudin}
\institute{Sorbonne Université, Laboratoire Jacques-Louis Lions, 4 place
Jussieu, 75005 Paris, France	
\email{glaudin@ljll.math.upmc.fr}
}
%
%
\maketitle

\abstract{
The convergence of a new general variable metric algorithm based on
compositions of averaged operators is established. Applications to 
monotone operator splitting are presented.
}
\title{\sffamily\huge
}

\begin{keywords} 
averaged operator, composite algorithm, convex optimization, 
fixed point iteration, monotone operator splitting,
primal-dual algorithm, variable metric
\end{keywords}

\noindent{\bf AMS 2010 Subject Classification:}  47H05, 49M27,
49M29, 90C25

\section{Introduction}
\label{sec:intro}

Iterations of averaged nonexpansive operators provide a synthetic
framework for the analysis of many algorithms in nonlinear
analysis, e.g., \cite{Bail78,Livre1,Cegi12,Comb04,Comb15}.
We establish the convergence of a new general variable metric 
algorithm based on compositions of averaged operators. These
results are applied to the analysis of the convergence of a new
forward-backward algorithm for solving the inclusion
\begin{equation}
0\in Ax+Bx,
\end{equation}
where $A$ and $B$ are maximally monotone operators 
on a real Hilbert space.
The theory of monotone operators is used in many applied mathematical
fields, including optimization \cite{Comb18_2}, 
partial differential equations and evolution inclusions
\cite{Brez73,Peyp10,Show97}, signal processing \cite{Comb11,Comb05},
and statistics and machine learning \cite{Comb18,Duch09,Jena11}.
In recent years, variants of the forward-backward algorithm with
variable metric have been proposed in
\cite{Comb13,Comb14,Salz17,Vu13}, as well as variants involving
overrelaxations~\cite{Comb15}. The goal of the present paper is to
unify these two approaches in the general context of iterations of
compositions of averaged operators. In turn, this provides new
methods to solve the problems studied in 
\cite{Alot14,Livre1,Bric11,Chamb11,Comb04,Comb12}.

The paper is organized as follows: Section~\ref{sec:not} presents
the background and notation. We establish the proof of the
convergence of the general algorithm in Section~\ref{sec:mainconv}.
Special cases are provided in Section~\ref{sec:app}. Finally, by
recasting these results in certain product spaces, 
we present and solve a general monotone inclusion in
Section~\ref{sec:composite}.

\section{Notation and background}
\label{sec:not}

Throughout this paper, $\HH$, $\GG$, and $(\GG_i)_{1\leq i\leq m}$ are real 
Hilbert spaces. We use $\scal{\cdot}{\cdot}$ to denote
the scalar product of a Hilbert space and $\|\cdot\|$
for the associated norm.
Weak and strong convergence are respectively denoted by $\weakly$ and
$\to$.
We denote by $\BL(\HH,\GG)$ the space of bounded linear operators 
from $\HH$ to $\GG$, and set $\BL(\HH)=\BL(\HH,\HH)$ and
$\SL(\HH)=\menge{L\in\BL(\HH)}{L=L^*}$, where $L^*$ denotes the
adjoint of $L$, and $\Id$ denotes the identity operator. 
The Loewner partial ordering on $\SL(\HH)$ is defined by
\begin{equation}
(\forall U\in\SL(\HH))(\forall V\in\SL(\HH))\quad
U\succcurlyeq V\quad\Leftrightarrow\quad(\forall x\in\HH)\quad
\scal{Ux}{x}\geq\scal{Vx}{x}.
\end{equation}
Let $\alpha\in\RPP$. We set
\begin{equation}
\BP_{\alpha}(\HH)=\menge{U\in\SL(\HH)}{U\succcurlyeq\alpha\Id},
\end{equation}
and we denote by $\sqrt{U}$ the square root of 
$U\in\BP_{\alpha}(\HH)$. Moreover, for every 
$U\in\BP_\alpha(\HH)$, we define a scalar product and 
a norm by
\begin{equation}
(\forall x\in\HH)(\forall y\in\HH)\quad\scal{x}{y}_U=\scal{Ux}{y}
\quad\text{and}\quad\|x\|_U=\sqrt{\scal{Ux}{x}},
\end{equation}
and we denote this Hilbert space by $(\HH,U)$.
Let $A\colon\HH\to 2^\HH$ be a set-valued operator.
We denote by 
$\dom A=\menge{x\in\HH}{Ax\neq\emp}$ the domain of $A$,
by $\gra A=\menge{(x,u)\in\HH\times\HH}{u\in Ax}$ the graph of $A$,
by $\ran A=\menge{u\in\HH}{(\exi x\in\HH)\;u\in Ax}$
the range of $A$, by 
$\zer A=\menge{x\in\HH}{0\in Ax}$ the set of zeros of $A$, 
and by $A^{-1}$ the inverse of $A$ which is the operator with 
graph $\menge{(u,x)\in\HH\times\HH}{u\in Ax}$. The resolvent of $A$ is
$J_A=(\Id+A)^{-1}$. Moreover, $A$ is monotone if
\begin{equation}
(\forall (x,u)\in\gra A)(\forall (y,v)\in\gra A)\quad
\scal{x-y}{u-v}\geq 0,
\end{equation}
and maximally monotone if there exists no monotone operator 
$B\colon\HH\to 2^{\HH}$ such that $\gra A\subset\gra B\neq\gra A$.
The parallel sum of $A\colon\HH\to 2^{\HH}$ and $B\colon\HH\to 2^{\HH}$ is 
\begin{equation}
A\infconv B=(A^{-1}+ B^{-1})^{-1}.
\end{equation} 
An operator $B\colon\HH\to 2^{\HH}$ is cocoercive with constant
$\beta\in\RPP$ if 
\begin{equation}
\label{e:coco}
(\forall x\in\HH)(\forall y\in\HH)\quad
\scal{x-y}{Bx-By}\geq\beta\|Bx-By\|^2.
\end{equation}
Let $C$ be a nonempty subset of $\HH$. The interior of $C$ is $\inte C$.
Finally, the set of summable sequences in $\RP$ is denoted by $\ell_+^1(\NN)$.

\begin{definition}
	Let $\mu\in\RPP$, let $U\in\BP_\mu(\HH)$, 
	let $D$ be a nonempty subset of $\HH$, let $\alpha\in\rzeroun$, and
	let $T\colon \HH\to\HH$ be an operator. Then $T$ is an $\alpha$-averaged
 operator on $(\HH,U)$ if
\begin{equation}
(\forall x\in\HH)(\forall y\in\HH)\quad \|Tx-Ty\|_U^2\leq
\|x-y\|^2_U-\dfrac{1-\alpha}{\alpha}\|Tx-x\|^2_U.
\end{equation}
If $\alpha=1$, $T$ is nonexpansive on $(\HH,U)$.
\end{definition}

\begin{lemma}\emph{\cite[Proposition~4.46]{Livre1}}
\label{l:compo}
Let $m\geq 1$ be an integer. For every $i\in\{1,\ldots,m\}$, let $T_i\colon
\HH\to \HH$ be averaged. Then $T_1\cdots T_m$ is averaged.
\end{lemma}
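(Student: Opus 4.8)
The plan is to reduce the general statement to the case $m=2$ and then iterate, since once we know that the composition of two averaged operators is averaged, a straightforward induction on $m$ yields the result for arbitrary $m$ (with the $m=1$ case being trivial). So the crux is: if $T_1$ is $\alpha_1$-averaged and $T_2$ is $\alpha_2$-averaged (on $(\HH,U)$, though I will suppress $U$ and just write $\|\cdot\|$ for $\|\cdot\|_U$), then $T_1T_2$ is $\alpha$-averaged for some $\alpha\in\zeroun$, and in fact one expects the explicit constant $\alpha=(\alpha_1+\alpha_2-2\alpha_1\alpha_2)/(1-\alpha_1\alpha_2)$.

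First I would fix $x,y\in\HH$ and abbreviate $u=T_2x$, $v=T_2y$, so that $T_1T_2x=T_1u$, $T_1T_2y=T_1v$. Writing out the averagedness inequality for $T_2$ gives
\begin{equation}
\|u-v\|^2\leq\|x-y\|^2-\frac{1-\alpha_2}{\alpha_2}\big(\|u-x\|^2+\|v-y\|^2-2\scal{u-x}{v-y}\big),
\end{equation}
where I have expanded $\|(u-x)-(v-y)\|^2$; similarly the averagedness of $T_1$ applied to $u,v$ gives a bound on $\|T_1u-T_1v\|^2$ in terms of $\|u-v\|^2$ and $\|T_1u-u\|^2+\|T_1v-v\|^2$ and the cross term $\scal{T_1u-u}{T_1v-v}$. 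The natural route is to use the characterization of averagedness in terms of firm-type inequalities: $T_i$ is $\alpha_i$-averaged iff for all $a,b$, $\scal{T_ia-T_ib}{(\Id-T_i)a-(\Id-T_i)b}\geq\frac{1-\alpha_i}{2\alpha_i}\wait$ — more cleanly, iff $(1-1/\alpha_i)\Id+(1/\alpha_i)T_i$ is nonexpansive, equivalently $\|T_ia-T_ib\|^2+(1-2\alpha_i)\|a-b\|^2\leq 2(1-\alpha_i)\scal{a-b}{T_ia-T_ib}$. I would add the two such inequalities (for $T_1$ on the pair $u,v$ and for $T_2$ on the pair $x,y$) with suitable nonnegative weights, and manipulate the resulting inequality into the form $\|T_1T_2x-T_1T_2y\|^2+(1-2\alpha)\|x-y\|^2\leq 2(1-\alpha)\scal{x-y}{T_1T_2x-T_1T_2y}$.

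The main obstacle is the bookkeeping of the cross terms: after combining the two inequalities one is left with inner products such as $\scal{x-y}{u-v}$, $\scal{u-v}{T_1u-T_1v}$, and $\scal{x-u}{\cdot}$, and getting these to collapse into a single clean inner product $\scal{x-y}{T_1T_2x-T_1T_2y}$ plus a manifestly nonpositive remainder requires choosing the combination weights (in terms of $\alpha_1,\alpha_2$) just right and then completing a square in the leftover quadratic form. This is the one step where a genuine, if routine, computation is unavoidable; everything else is formal. Once the $m=2$ case is in hand with its explicit averagedness constant, I would finish by induction: assuming $T_2\cdots T_m$ is $\alpha'$-averaged, apply the two-operator result to $T_1$ and $T_2\cdots T_m$ to conclude that $T_1\cdots T_m$ is averaged, which completes the proof.
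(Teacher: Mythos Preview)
The paper does not prove this lemma; it is quoted with a citation to \cite[Proposition~4.46]{Livre1} and used as a black box, so there is no in-paper argument to compare your proposal against.

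On its own merits your plan is correct. Reducing to $m=2$ and then inducting is exactly how the cited reference organizes the proof (Proposition~4.46 there is deduced from the two-operator case, Proposition~4.44), and the explicit constant $\alpha=(\alpha_1+\alpha_2-2\alpha_1\alpha_2)/(1-\alpha_1\alpha_2)$ you anticipate is the right one --- it reappears later in this very paper as~\eqref{e:c1_1}, invoked via \cite[Proposition~4.44]{Livre1}. The only caveat is that what you have written is an outline rather than a proof: you correctly identify that the substantive step is a weighted combination of the two averagedness inequalities followed by completing a square, but you do not carry that computation out. There is no missing idea, only missing algebra; if you were submitting this as a proof, that step would need to be filled in.
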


\begin{lemma}\emph{\cite[Proposition~4.35]{Livre1}}
\label{l:nonex}
Let $\mu\in\RPP$, let $U\in\BP_{\mu}(\HH)$, let $\alpha\in\rzeroun$,
and let $T$ be an $\alpha$-averaged operator on $(\HH,U)$. 
Then the operator $R=(1-1/\alpha)\Id+(1/\alpha)T$ is 
nonexpansive on $(\HH,U)$.
\end{lemma}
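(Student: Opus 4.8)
The plan is to prove Lemma~\ref{l:nonex} by a direct computation in the Hilbert space $(\HH,U)$. Since $U\in\BP_\mu(\HH)$ with $\mu>0$, the form $\scal{\cdot}{\cdot}_U$ is a genuine scalar product and $\|\cdot\|_U$ an equivalent norm, so the usual cosine and polarization identities are available. Fix $x,y\in\HH$ and write $u=x-y$ and $v=Tx-Ty$, so that $(\Id-T)x-(\Id-T)y=u-v$ and, from $R=(1-1/\alpha)\Id+(1/\alpha)T$,
\begin{equation}
Rx-Ry=\Big(1-\frac{1}{\alpha}\Big)u+\frac{1}{\alpha}v=u-\frac{1}{\alpha}(u-v).
\end{equation}

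First I would expand the squared $U$-norm of this vector,
\begin{equation}
\|Rx-Ry\|_U^2=\|u\|_U^2-\frac{2}{\alpha}\scal{u}{u-v}_U+\frac{1}{\alpha^2}\|u-v\|_U^2,
\end{equation}
and then substitute the polarization identity $\scal{u}{u-v}_U=\frac{1}{2}\big(\|u\|_U^2-\|v\|_U^2+\|u-v\|_U^2\big)$ to obtain
\begin{equation}
\|Rx-Ry\|_U^2=\Big(1-\frac{1}{\alpha}\Big)\|u\|_U^2+\frac{1}{\alpha}\|v\|_U^2+\Big(\frac{1}{\alpha^2}-\frac{1}{\alpha}\Big)\|u-v\|_U^2.
\end{equation}
Next I would invoke the hypothesis that $T$ is $\alpha$-averaged on $(\HH,U)$, that is, $\|v\|_U^2\le\|u\|_U^2-\frac{1-\alpha}{\alpha}\|u-v\|_U^2$, and multiply it by the positive number $1/\alpha$. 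Since $\frac{1}{\alpha}\cdot\frac{1-\alpha}{\alpha}=\frac{1}{\alpha^2}-\frac{1}{\alpha}$, inserting this bound into the last display makes the $\|u-v\|_U^2$ terms cancel exactly while the $\|u\|_U^2$ coefficients add up to $1$, so that $\|Rx-Ry\|_U^2\le\|u\|_U^2=\|x-y\|_U^2$. As $x$ and $y$ are arbitrary, $R$ is nonexpansive on $(\HH,U)$; the computation is uniform in $\alpha\in\rzeroun$, the value $\alpha=1$ simply giving $R=T$.

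I do not expect a genuine obstacle here, as the argument is purely computational; the only things to watch are the exact matching of the coefficients $1/\alpha$ and $(1-\alpha)/\alpha$ that produces the cancellation, and the (harmless) remark that positive definiteness of $U$ makes the identities used legitimate. A shorter alternative, which I would at most mention in passing, is to note that $\alpha$-averagedness of $T$ on $(\HH,U)$ is equivalent to the existence of an operator $N$ nonexpansive on $(\HH,U)$ with $T=(1-\alpha)\Id+\alpha N$; then $R=(1-1/\alpha)\Id+(1/\alpha)\big((1-\alpha)\Id+\alpha N\big)=N$, which is nonexpansive. The self-contained computation above is preferable, since it avoids first establishing that equivalence.
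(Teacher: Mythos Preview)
Your argument is correct. The expansion of $\|Rx-Ry\|_U^2$ and the cancellation of the $\|u-v\|_U^2$ terms after invoking the averagedness inequality are accurate, and the boundary case $\alpha=1$ is handled. Note, incidentally, that the paper's Definition~1 contains a typographical slip (the last term should read $\|(\Id-T)x-(\Id-T)y\|_U^2$ rather than $\|Tx-x\|_U^2$); you have tacitly used the correct, symmetric form, which is the only reading under which the lemma and the rest of the paper make sense.

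The paper itself gives no proof of this lemma: it is quoted from \cite[Proposition~4.35]{Livre1}. In that reference the \emph{definition} of $\alpha$-averagedness is precisely $T=(1-\alpha)\Id+\alpha N$ with $N$ nonexpansive, and the inequality you start from is a derived characterization. So the ``shorter alternative'' you mention at the end---solving for $N$ and observing $R=N$---is in fact the approach native to the cited source, while your direct computation is the natural one for the present paper, where the inequality is taken as the definition. Both are fine; your choice is better adapted to the paper's conventions.
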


\begin{lemma}\emph{\cite[Lemma~5.31]{Livre1}}
\label{l:12}
Let $(\alpha_n)_{n\in\NN}$ and $(\beta_n)_{n\in\NN}$ be sequences in $\RP$, 
let $(\eta_n)_{n\in\NN}$ and $(\varepsilon_n)_{n\in\NN}$ be sequences in 
$\in\ell^1_+(\NN)$ such that 
\begin{equation}
(\forall n\in\NN)\quad
\alpha_{n+1}\leq (1+\eta_n)\alpha_n-\beta_n+\varepsilon_n.
\end{equation}
Then $(\beta_n)_{n\in\NN}\in\ell^1_+(\NN)$.
\end{lemma}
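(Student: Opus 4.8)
The plan is to convert the recursion with the multiplicative perturbation $(1+\eta_n)$ into a purely additive one by renormalizing, and then to telescope. Since $(\eta_n)_{n\in\NN}\in\ell^1_+(\NN)$, I would first record that the partial products $\mu_n=\prod_{k=0}^{n-1}(1+\eta_k)$, with the convention $\mu_0=1$, form a nondecreasing sequence in $[1,\pinf[$ which converges to a finite limit $\mu_\infty\ge 1$ (use $\ln(1+\eta_k)\le\eta_k$ together with $\Sum_{k\in\NN}\eta_k<\pinf$); in particular $1\le\mu_n\le\mu_\infty<\pinf$ for every $n\in\NN$. This two-sided boundedness is the only place where summability of $(\eta_n)_{n\in\NN}$, as opposed to mere convergence of the product, is genuinely needed.

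Next, divide the hypothesis $\alpha_{n+1}\le(1+\eta_n)\alpha_n-\beta_n+\varepsilon_n$ by $\mu_{n+1}=(1+\eta_n)\mu_n$ and set $a_n=\alpha_n/\mu_n$, $b_n=\beta_n/\mu_{n+1}$, and $e_n=\varepsilon_n/\mu_{n+1}$. This yields
\begin{equation}
(\forall n\in\NN)\quad 0\le a_{n+1}\le a_n-b_n+e_n,
\end{equation}
where $(b_n)_{n\in\NN}$ lies in $\RP$ and $0\le e_n\le\varepsilon_n$, so that $(e_n)_{n\in\NN}\in\ell^1_+(\NN)$. Summing this inequality over $n\in\{0,\ldots,N\}$ and using $a_{N+1}\ge 0$ gives $\Sum_{n=0}^N b_n\le a_0+\Sum_{n\in\NN}e_n<\pinf$ for every $N\in\NN$; letting $N\to\pinf$ shows $(b_n)_{n\in\NN}\in\ell^1_+(\NN)$.

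Finally, since $\beta_n=\mu_{n+1}b_n\le\mu_\infty b_n$ for every $n\in\NN$, we obtain $\Sum_{n\in\NN}\beta_n\le\mu_\infty\Sum_{n\in\NN}b_n<\pinf$, i.e., $(\beta_n)_{n\in\NN}\in\ell^1_+(\NN)$, as claimed. There is essentially no obstacle in this argument: the one step requiring a little care is the renormalization by $(\mu_n)_{n\in\NN}$ that turns the multiplicative perturbation into an additive one, after which a telescoping sum finishes the proof.
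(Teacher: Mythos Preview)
Your argument is correct. Note, however, that the paper does not supply its own proof of this lemma: it is quoted verbatim as \cite[Lemma~5.31]{Livre1} and used as a black box in the proof of Theorem~\ref{t:1}. Your renormalization by the partial products $\mu_n=\prod_{k=0}^{n-1}(1+\eta_k)$ followed by telescoping is precisely the standard proof one finds in the cited reference, so there is nothing to contrast.
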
	

\begin{lemma}\emph{\cite[Proposition~4.1]{Comb13}}
\label{l:11} 
Let $\alpha\in\RPP$, let $(W_n)_{n\in\NN}$ be in 
$\BP_{\alpha}(\HH)$, let $C$ be a nonempty subset of $\HH$, 
and let $(x_n)_{n\in\NN}$ be a sequence in $\HH$ such that 
\begin{multline}
\label{e:vmqf}
\big(\exi(\eta_n)_{n\in\NN}\in\ell_+^1(\NN)\big)
\big(\forall z\in C\big)\big(\exi(\varepsilon_n)_{n\in\NN}\in
\ell_+^1(\NN)\big)(\forall n\in\NN)\\
\|x_{n+1}-z\|^2_{W_{n+1}}\leq(1+\eta_n)
\|x_n-z\|^2_{W_n}+\varepsilon_n.
\end{multline}
Then $(x_n)_{n\in\NN}$ is bounded and, for every $z\in C$,
$(\|x_n-z\|_{W_n})_{n\in\NN}$ converges.
\end{lemma}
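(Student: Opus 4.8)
The plan is to recognize that Lemma~\ref{l:11} is the classical ``quasi-Fej\'er with variable metric'' result and that a proof should proceed by reducing the variable-metric inequality \eqref{e:vmqf} to a scalar recursion to which Lemma~\ref{l:12} applies, and then invoking the stated convergence of the scalar sequence $(\|x_n-z\|_{W_n})_{n\in\NN}$ together with the uniform lower bound $W_n\succcurlyeq\alpha\Id$ to get boundedness of $(x_n)_{n\in\NN}$ in the original norm. So, fixing $z\in C$ and the associated sequence $(\varepsilon_n)_{n\in\NN}\in\ell^1_+(\NN)$ from \eqref{e:vmqf}, I would set $\xi_n=\|x_n-z\|^2_{W_n}$ and observe that \eqref{e:vmqf} reads $\xi_{n+1}\le(1+\eta_n)\xi_n+\varepsilon_n$; since $(\eta_n)_{n\in\NN}\in\ell^1_+(\NN)$, Lemma~\ref{l:12} (applied with $\beta_n\equiv0$, or just the elementary fact that such a recursion with summable $\eta_n,\varepsilon_n$ yields a convergent sequence) shows $(\xi_n)_{n\in\NN}$ is bounded; one then passes from $\xi_n$ to $\|x_n-z\|^2$ via $\|x_n-z\|^2\le\alpha^{-1}\xi_n$, which gives boundedness of $(x_n)_{n\in\NN}$.

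Next, for the convergence of $(\|x_n-z\|_{W_n})_{n\in\NN}$, I would argue that it suffices to show $(\xi_n)_{n\in\NN}$ converges, and that this is a standard consequence of the inequality $\xi_{n+1}\le(1+\eta_n)\xi_n+\varepsilon_n$ with $(\eta_n)_{n\in\NN}$ and $(\varepsilon_n)_{n\in\NN}$ summable. The usual device is to introduce the nonnegative sequence $\theta_n=\prod_{k\ge n}(1+\eta_k)^{-1}$ (well defined and bounded away from $0$ because $\sum_n\eta_n<\pinf$) and to check that $\theta_n\xi_n$ is, up to an absolutely summable perturbation, nonincreasing; a bounded sequence that is eventually decreasing modulo a summable term converges, hence $\theta_n\xi_n$ converges, and since $\theta_n\to\theta_\infty\in\,]0,1]$ we conclude that $\xi_n$ converges, and therefore $\|x_n-z\|_{W_n}=\sqrt{\xi_n}$ converges as well.

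Alternatively, and more in keeping with the tools already laid out, I would get convergence of $(\xi_n)_{n\in\NN}$ directly from Lemma~\ref{l:12}: the same inequality with $\beta_n\equiv0$ gives $(\beta_n)_{n\in\NN}\in\ell^1_+(\NN)$ trivially, so that route does not by itself yield convergence, and one really does need the multiplicative-telescoping argument of the previous paragraph; I would therefore present that argument in full. The only mild subtlety to watch is that the sequence $(\varepsilon_n)_{n\in\NN}$ in \eqref{e:vmqf} is allowed to depend on the chosen $z\in C$, whereas $(\eta_n)_{n\in\NN}$ is not; this is harmless, since for each fixed $z$ we work with its own $(\varepsilon_n)_{n\in\NN}$, and boundedness of $(x_n)_{n\in\NN}$ follows from any single choice of $z\in C$ (the set $C$ being nonempty).

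The main obstacle, such as it is, is purely bookkeeping: one must be careful that ``$\|\cdot\|_{W_n}$'' refers to a different norm at each index $n$, so that ``$(\|x_n-z\|_{W_n})_{n\in\NN}$ converges'' is a statement about a sequence of real numbers (each computed in its own metric) rather than about convergence in a fixed normed space; once the problem is transcribed into the scalar sequence $\xi_n=\scal{W_n(x_n-z)}{x_n-z}$, everything reduces to the elementary lemma on perturbed nonincreasing sequences, and the uniform coercivity $W_n\succcurlyeq\alpha\Id$ does the rest for boundedness in the ambient space.
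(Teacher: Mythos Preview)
The paper does not prove Lemma~\ref{l:11}; it is quoted verbatim from \cite[Proposition~4.1]{Comb13} and used as a black box. There is therefore no ``paper's own proof'' to compare against. Your outline is the standard argument behind that cited result and is sound in spirit: reduce \eqref{e:vmqf} to the scalar recursion $\xi_{n+1}\le(1+\eta_n)\xi_n+\varepsilon_n$ with $\xi_n=\|x_n-z\|^2_{W_n}$, deduce that $(\xi_n)_{n\in\NN}$ converges, and use $W_n\succcurlyeq\alpha\Id$ to pass to boundedness in the ambient norm.

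One genuine slip: your normalizing factor $\theta_n=\prod_{k\ge n}(1+\eta_k)^{-1}$ is the wrong way round. Since $\theta_{n+1}=(1+\eta_n)\theta_n$, multiplying the recursion by $\theta_{n+1}$ gives
\[
\theta_{n+1}\xi_{n+1}\le(1+\eta_n)^2\,\theta_n\xi_n+\theta_{n+1}\varepsilon_n,
\]
which is no improvement. The device that actually telescopes is to \emph{divide} by $\tau_n=\prod_{k<n}(1+\eta_k)$ (increasing, with finite limit $\tau_\infty$ because $\sum_n\eta_n<\pinf$): then
\[
\frac{\xi_{n+1}}{\tau_{n+1}}\le\frac{\xi_n}{\tau_n}+\frac{\varepsilon_n}{\tau_{n+1}},
\]
so $(\xi_n/\tau_n)_{n\in\NN}$ is nonincreasing modulo a summable perturbation, hence convergent, and $\xi_n=(\xi_n/\tau_n)\tau_n$ converges since $\tau_n\to\tau_\infty\in[1,\pinf[$. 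With this correction your argument goes through, and your remarks about the $z$-dependence of $(\varepsilon_n)_{n\in\NN}$ and about boundedness via any single $z\in C$ are exactly right.
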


\begin{proposition}\emph{\cite[Theorem~3.3]{Comb13}}
\label{p:vmwf} 
Let $\alpha\in\RPP$, and let $(W_n)_{n\in\NN}$ and $W$ be 
operators in $\BP_{\alpha}(\HH)$ such that $W_n\to W$ pointwise, 
as is the case when 
\begin{equation}
\sup_{n\in\NN}\|W_n\|<\pinf\quad\text{and}\quad
(\exi (\eta_n)_{n\in\NN}\in\ell_+^1(\NN))
(\forall n\in\NN)\quad (1+\eta_n)W_n\succcurlyeq W_{n+1}.
\end{equation}
Let $C$ be a nonempty subset of 
$\HH$, and let $(x_n)_{n\in\NN}$ be a sequence in $\HH$ such 
that \eqref{e:vmqf} is satisfied. Then 
$(x_n)_{n\in\NN}$ converges weakly to a point in $C$ 
if and only if every weak sequential cluster point of 
$(x_n)_{n\in\NN}$ is in $C$.
\end{proposition}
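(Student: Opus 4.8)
The plan is to prove a variable-metric version of Opial's lemma, treating the two implications separately. The forward implication is immediate: if $x_n\weakly x$ with $x\in C$, then any weak sequential cluster point of $(x_n)_{n\in\NN}$ is the weak limit of a subsequence, hence equals $x$, and therefore belongs to $C$.

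For the reverse implication, assume every weak sequential cluster point of $(x_n)_{n\in\NN}$ lies in $C$. First I would invoke Lemma~\ref{l:11}: since \eqref{e:vmqf} holds, $(x_n)_{n\in\NN}$ is bounded in $\HH$ and, for every $z\in C$, the sequence $(\|x_n-z\|_{W_n})_{n\in\NN}$ converges. Boundedness guarantees at least one weak sequential cluster point, and it suffices to show there is exactly one, because a bounded sequence with a unique weak sequential cluster point converges weakly to it (and the limit lies in $C$ by hypothesis). So let $x$ and $y$ be two weak sequential cluster points; both lie in $C$.

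The heart of the argument is to show $x=y$. Expanding the $W_n$-norms and using $W_n=W_n^*$ yields
\begin{equation}
\|x_n-x\|_{W_n}^2-\|x_n-y\|_{W_n}^2
=-2\scal{x_n}{W_n(x-y)}+\scal{W_nx}{x}-\scal{W_ny}{y}.
\end{equation}
Since $W_n\to W$ pointwise and $(x_n)_{n\in\NN}$ is bounded, writing $\scal{x_n}{W_n(x-y)}=\scal{x_n}{W(x-y)}+\scal{x_n}{(W_n-W)(x-y)}$ shows the last term tends to $0$ (Cauchy--Schwarz, since $\|(W_n-W)(x-y)\|\to0$); moreover $\scal{W_nx}{x}\to\scal{Wx}{x}$ and $\scal{W_ny}{y}\to\scal{Wy}{y}$. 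As $x,y\in C$, Lemma~\ref{l:11} makes the left-hand side convergent, so $(\scal{x_n}{W(x-y)})_{n\in\NN}$ converges, say to $\ell$. Passing to a subsequence converging weakly to $x$ gives $\ell=\scal{x}{W(x-y)}$, and to one converging weakly to $y$ gives $\ell=\scal{y}{W(x-y)}$; hence $\scal{x-y}{W(x-y)}=0$, and since $W\in\BP_\alpha(\HH)$ this forces $\alpha\|x-y\|^2\leq0$, i.e., $x=y$. This finishes the reverse implication, and hence the equivalence. For the parenthetical sufficient condition, set $\theta_n=\prod_{k<n}(1+\eta_k)$, which converges to a finite $\theta_\infty\geq1$ because $(\eta_n)_{n\in\NN}\in\ell_+^1(\NN)$; the hypothesis rewrites as $\theta_n^{-1}W_n\succcurlyeq\theta_{n+1}^{-1}W_{n+1}$, so $(\theta_n^{-1}W_n)_{n\in\NN}$ is nonincreasing in the Loewner order, norm-bounded, and bounded below by $(\alpha/\theta_\infty)\Id$; the monotone convergence theorem for self-adjoint operators then gives strong convergence $\theta_n^{-1}W_n\to\widetilde W\in\SL(\HH)$, whence $W_n\to\theta_\infty\widetilde W=:W$ pointwise, with $W\succcurlyeq\alpha\Id$ because the inequality survives the strong limit.

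The main obstacle is the cross term $\scal{x_n}{W_n(x-y)}$: in the classical fixed-metric Opial lemma one simply feeds weak convergence of a subsequence of $(x_n)_{n\in\NN}$ into a constant inner product, whereas here the metric varies with $n$, and the argument only closes because the \emph{strong} pointwise convergence $W_n\to W$ allows replacing $W_n(x-y)$ by $W(x-y)$ up to an error that vanishes against the bounded sequence $(x_n)_{n\in\NN}$ — the bound $\sup_n\|W_n\|<\pinf$ being precisely what keeps that error controlled and what (together with the monotonicity) secures existence and boundedness of the limiting metric $W$.
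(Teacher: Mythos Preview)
The paper does not prove Proposition~\ref{p:vmwf}; it is quoted as a background result from \cite[Theorem~3.3]{Comb13}, so there is no in-paper proof to compare against. Your argument is correct and is essentially the variable-metric Opial argument used in the cited source: boundedness and convergence of $(\|x_n-z\|_{W_n})_{n\in\NN}$ from Lemma~\ref{l:11}, the polarization identity to isolate $\scal{x_n}{W_n(x-y)}$, pointwise convergence $W_n\to W$ to replace $W_n$ by $W$ up to a vanishing error, and positivity $W\succcurlyeq\alpha\Id$ to force $x=y$; your handling of the parenthetical sufficient condition via the telescoping product $\theta_n=\prod_{k<n}(1+\eta_k)$ and the monotone convergence theorem for self-adjoint operators is also the standard route.
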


\begin{proposition}\emph{\cite[Proposition~3.6]{Comb14}}
\label{p:vmstrong2}
Let $\alpha\in\RPP$, let $(\nu_n)_{n\in\NN}\in\ell_+^1(\NN)$, and
let $(W_n)_{n\in\NN}$ be a sequence in $\BP_{\alpha}(\HH)$
such that $\sup_{n\in\NN}\|W_n\|<\pinf$ and
$(\forall n\in\NN)$ $(1+\nu_n)W_{n+1}\succcurlyeq W_n$.
Furthermore, let $C$ be a subset of $\HH$ such that 
$\inte C\neq\emp$ and let $(x_n)_{n\in\NN}$ be a sequence in 
$\HH$ such that 
\begin{multline}
\big(\exi(\varepsilon_n)_{n\in\NN}\in\ell_+^1(\NN)\big)
\big(\exi(\eta_n)_{n\in\NN}\in\ell_+^1(\NN)\big)
(\forall x\in \HH)(\forall n\in\NN)\\
\|x_{n+1}-x\|^2_{W_{n+1}}\leq
(1+\eta_n)\|x_n-x\|^2_{W_n}+\varepsilon_n.
\end{multline}
Then $(x_n)_{n\in\NN}$ converges strongly.
\end{proposition}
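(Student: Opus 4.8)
The plan is to exploit $\inte C\neq\emp$ to fix $z\in\HH$ and $\rho\in\RPP$ with $\menge{x\in\HH}{\|x-z\|\leq\rho}\subset C$ (so the hypothesis supplies the quasi-Fej\'er estimate at every $x$ in this closed ball), and to prove that $W_n(x_n-z)$ converges strongly; since $W_n\succcurlyeq\alpha\Id$, this forces $(x_n)_{n\in\NN}$ itself to converge strongly. First I would collect the elementary consequences of the hypotheses. Because $(\eta_n)_{n\in\NN},(\nu_n)_{n\in\NN}\in\ell_+^1(\NN)$, the tail products $1+\rho_m:=\prod_{k\geq m}(1+\eta_k)$ and $1+\widetilde\rho_m:=\prod_{k\geq m}(1+\nu_k)$ are finite with $\rho_m\to0$ and $\widetilde\rho_m\to0$; iterating $(1+\nu_k)W_{k+1}\succcurlyeq W_k$ yields $W_m\preccurlyeq(1+\widetilde\rho_m)W_n$ for all $n\geq m$; and, as for the parenthetical hypothesis of Proposition~\ref{p:vmwf} (with the monotonicity reversed; see~\cite{Comb14}), $W_n\to W$ pointwise for some $W\in\BP_\alpha(\HH)$, so $W$ is invertible with $\|W^{-1}\|\leq1/\alpha$. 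Put $\beta:=\sup_{n\in\NN}\|W_n\|<\pinf$. By Lemma~\ref{l:11} (with the ball in place of $C$), $(x_n)_{n\in\NN}$ is bounded, say $\|x_n-z\|\leq M$ for all $n$, and $\|x_n-z\|_{W_n}^2$ converges to some $\phi\in\RP$. Finally, iterating the quasi-Fej\'er inequality from index $m$ to index $n$ gives, for all $m\in\NN$, $n\geq m$, and every $w$ in the ball,
\begin{equation}
\|x_n-w\|_{W_n}^2\leq(1+\rho_m)\|x_m-w\|_{W_m}^2+s_m,\qquad\text{where }\ s_m:=(1+\rho_m)\sum_{k\geq m}\varepsilon_k\longrightarrow0.
\end{equation}

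The core of the proof is a Cauchy estimate for $(W_n(x_n-z))_{n\in\NN}$. Fix $m\in\NN$, $n\geq m$, and a unit vector $e\in\HH$, and apply the last display with $w=z+\rho e$. Expanding both sides in their metrics and gathering the terms linear in $e$ gives, with $\xi_{m,n}:=(1+\rho_m)W_m(x_m-z)-W_n(x_n-z)$,
\begin{equation}
2\rho\scal{\xi_{m,n}}{e}\leq\big((1+\rho_m)\|x_m-z\|_{W_m}^2-\|x_n-z\|_{W_n}^2\big)+\rho^2\scal{\big((1+\rho_m)W_m-W_n\big)e}{e}+s_m.
\end{equation}
Now take $e:=\xi_{m,n}/\|\xi_{m,n}\|$ (the case $\xi_{m,n}=0$ being trivial), so the left-hand side becomes $2\rho\|\xi_{m,n}\|$. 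On the right-hand side, the first parenthesis is at most $(1+\rho_m)\|x_m-z\|_{W_m}^2-\inf_{k\geq m}\|x_k-z\|_{W_k}^2$, which tends to $\phi-\phi=0$ uniformly in $n\geq m$, and
\begin{equation*}
\scal{\big((1+\rho_m)W_m-W_n\big)e}{e}=\scal{(W_m-W_n)e}{e}+\rho_m\scal{W_me}{e}\leq\widetilde\rho_m\scal{W_ne}{e}+\rho_m\beta\leq(\widetilde\rho_m+\rho_m)\beta,
\end{equation*}
since $W_m\preccurlyeq(1+\widetilde\rho_m)W_n$ and $\|e\|=1$. Hence $\sup_{n\geq m}\|\xi_{m,n}\|\to0$ as $m\to\infty$, and as $\|W_n(x_n-z)-W_m(x_m-z)\|\leq\|\xi_{m,n}\|+\rho_m\|W_m(x_m-z)\|\leq\|\xi_{m,n}\|+\rho_m\beta M$ for $n\geq m$, the sequence $(W_n(x_n-z))_{n\in\NN}$ is Cauchy; call $g$ its strong limit.

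To finish, note that since $(x_n)_{n\in\NN}$ is bounded, any weak sequential cluster point $h$ of $(x_n-z)_{n\in\NN}$ satisfies $\scal{g}{y}=\lim_n\scal{W_n(x_n-z)}{y}=\lim_n\scal{x_n-z}{W_ny}=\scal{h}{Wy}$ for all $y\in\HH$ (the last step because $x_n-z\weakly h$ while $W_ny\to Wy$), whence $g=Wh$ and $h=W^{-1}g$ is uniquely determined; thus $x_n-z\weakly W^{-1}g=:h$. Consequently $\phi=\lim_n\scal{W_n(x_n-z)}{x_n-z}=\scal{g}{h}=\scal{Wh}{h}$, and expanding
\begin{equation*}
\|x_n-z-h\|_{W_n}^2=\|x_n-z\|_{W_n}^2-2\scal{W_n(x_n-z)}{h}+\scal{W_nh}{h}
\end{equation*}
and letting $n\to\infty$ yields $\|x_n-z-h\|_{W_n}^2\to\phi-2\scal{g}{h}+\scal{Wh}{h}=0$. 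Since $\|x_n-z-h\|_{W_n}^2\geq\alpha\|x_n-z-h\|^2$, it follows that $x_n\to z+h$, which is the assertion.

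I expect the crux — and the step most likely to cause trouble — to be the middle estimate of the displayed inequality in the second paragraph: the metric-mismatch term $\rho^2\scal{((1+\rho_m)W_m-W_n)e}{e}$ must be majorized by a null sequence \emph{uniformly} over unit vectors $e$ and over $n\geq m$, for otherwise the normalization $e=\xi_{m,n}/\|\xi_{m,n}\|$ is not admissible; this uniform control is exactly what the hypotheses $(1+\nu_n)W_{n+1}\succcurlyeq W_n$ and $\sup_n\|W_n\|<\pinf$ deliver. A subsidiary technical point is passing from the merely pointwise convergence $W_n\to W$ to the strong convergence of $(x_n)_{n\in\NN}$, which is achieved above via the uniform lower bound $W_n\succcurlyeq\alpha\Id$.
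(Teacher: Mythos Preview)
The paper does not prove this proposition; it is quoted without proof from \cite[Proposition~3.6]{Comb14}, so there is no argument in the present paper to compare yours against.

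Your proof is correct and follows the natural route for this class of strong-convergence results. A small remark on the statement: the quantifier in the displayed hypothesis is almost certainly a typo and should read $(\forall x\in C)$ rather than $(\forall x\in\HH)$; you implicitly read it that way, since you only ever invoke the inequality at points of the closed ball $B(z,\rho)\subset C$. The scheme---insert $w=z+\rho e$ into the iterated quasi-Fej\'er estimate, take $e=\xi_{m,n}/\|\xi_{m,n}\|$ to turn the linear term into $2\rho\|\xi_{m,n}\|$, and thereby prove that $(W_n(x_n-z))_{n\in\NN}$ is Cauchy---is exactly the device used in the variable-metric quasi-Fej\'er literature for the ``$\inte C\neq\emp$'' case. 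The step you flag as the crux, namely the uniform (in $e$ and in $n\geq m$) bound on $\scal{((1+\rho_m)W_m-W_n)e}{e}$, is handled precisely as you indicate via $W_m\preccurlyeq(1+\widetilde\rho_m)W_n$ and $\sup_n\|W_n\|<\pinf$.

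For completeness, the pointwise convergence $W_n\to W\in\BP_\alpha(\HH)$ that you cite from \cite{Comb14} can be justified directly: with $\Theta_n=\prod_{k=0}^{n-1}(1+\nu_k)$ the hypothesis $(1+\nu_n)W_{n+1}\succcurlyeq W_n$ gives $\Theta_{n+1}W_{n+1}\succcurlyeq\Theta_nW_n$, so $(\Theta_nW_n)_{n\in\NN}$ is Loewner-increasing and bounded above by $(\prod_{k\geq 0}(1+\nu_k))\,\mu\,\Id$, hence strongly convergent; dividing by $\Theta_n\to\prod_{k\geq 0}(1+\nu_k)\in\RPP$ yields the limit $W$, and $W\succcurlyeq\alpha\Id$ follows by passing to the limit in $\scal{W_nx}{x}\geq\alpha\|x\|^2$.
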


\begin{proposition}\emph{\cite[Proposition~3.4]{Comb13}}
\label{p:vmdc} 
Let $\alpha\in\RPP$, let $(W_n)_{n\in\NN}$ be a sequence in 
$\BP_{\alpha}(\HH)$ such that $\sup_{n\in\NN}\|W_n\|<\pinf$, let 
$C$ be a nonempty closed subset of $\HH$, and let 
$(x_n)_{n\in\NN}$ be a sequence in $\HH$ such that 
\begin{multline}
\big(\exi(\varepsilon_n)_{n\in\NN}\in\ell_+^1(\NN)\big)
\big(\exi(\eta_n)_{n\in\NN}\in\ell_+^1(\NN)\big)(\forall z\in C)
(\forall n\in\NN)\\
\|x_{n+1}-z\|^2_{W_{n+1}}\leq(1+\eta_n)\|x_n-z\|^2_{W_n}+\varepsilon_n.
\end{multline}
Then $(x_n)_{n\in\NN}$ converges strongly to a point in $C$ if 
and only if $\varliminf d_C(x_n)=0$.
\end{proposition}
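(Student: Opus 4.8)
The plan is to establish the two implications separately, the forward one being immediate: if $(x_n)_{n\in\NN}$ converges strongly to some $x\in C$, then $0\le d_C(x_n)\le\|x_n-x\|\to 0$, hence $\varliminf d_C(x_n)=0$. For the converse I would start from $\varliminf d_C(x_n)=0$, put $\beta=\sup_{n\in\NN}\|W_n\|<\pinf$ so that $\alpha\|x\|^2\le\|x\|_{W_n}^2\le\beta\|x\|^2$ for every $x\in\HH$ and $n\in\NN$, and let $(\eta_n)_{n\in\NN}$ and $(\varepsilon_n)_{n\in\NN}$ in $\ell_+^1(\NN)$ be the sequences furnished by the hypothesis; the key feature of the present statement is that these sequences are fixed \emph{before}, hence \emph{independently of}, $z$. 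Writing $\Phi=\prod_{n\in\NN}(1+\eta_n)\in\RPP$, which is finite because $\sum_{n\in\NN}\eta_n<\pinf$, and $\sigma_n=\sum_{k\ge n}\varepsilon_k\to 0$, I would iterate the hypothesis inequality from an index $n$ up to an index $m\ge n$ and bound each partial product of the factors $1+\eta_k$ by $\Phi$, so as to obtain, for every $z\in C$ and every $m\ge n$, first $\|x_m-z\|_{W_m}^2\le\Phi\|x_n-z\|_{W_n}^2+\Phi\sigma_n$ and then, using the two-sided comparison of the norms,
\[
\|x_m-z\|^2\le\frac{\Phi\beta}{\alpha}\,\|x_n-z\|^2+\frac{\Phi}{\alpha}\,\sigma_n .
\]

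Next I would show that $(x_n)_{n\in\NN}$ is a Cauchy sequence. Given $\epsilon>0$, I would first fix $\delta>0$ with $(\Phi\beta/\alpha)(2\delta)^2<\epsilon^2/8$; since $\sigma_n\to 0$ and, by $\varliminf d_C(x_n)=0$, there are infinitely many indices with $d_C(x_n)<\delta$, one can select a \emph{single} index $N$ satisfying simultaneously $(\Phi/\alpha)\sigma_N<\epsilon^2/8$ and $d_C(x_N)<\delta$. Choosing $z\in C$ with $\|x_N-z\|\le d_C(x_N)+\delta<2\delta$ and inserting $n=N$ into the displayed bound gives $\|x_m-z\|^2<\epsilon^2/4$, hence $\|x_m-z\|<\epsilon/2$, for every $m\ge N$, whence $\|x_m-x_{m'}\|<\epsilon$ for all $m,m'\ge N$. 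Completeness of $\HH$ then yields $x_n\to x$ strongly for some $x\in\HH$, and since $d_C$ is $1$-Lipschitz we get $d_C(x)=\lim_n d_C(x_n)=0$, so $x\in C$ because $C$ is closed, which is the asserted conclusion.

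The main obstacle I anticipate is precisely the simultaneous choice of the index $N$ in the Cauchy step: it is legitimate only because the error sequence $(\varepsilon_n)_{n\in\NN}$ does not depend on $z$, so that the tail $\sigma_N$ is already controlled before the point $z$ — which depends on $N$ — is picked; with a merely $z$-dependent error sequence (as in \eqref{e:vmqf}) this coupling would be the genuine difficulty. Everything else is routine estimation with the constants $\alpha$, $\beta$, and $\Phi$. (Alternatively one could invoke Lemma~\ref{l:11} to get, for free, boundedness of $(x_n)_{n\in\NN}$ and convergence of $(\|x_n-z\|_{W_n})_{n\in\NN}$ for each $z\in C$, and then argue that $\inf_{z\in C}\lim_n\|x_n-z\|_{W_n}=0$ along a minimizing sequence that turns out to be Cauchy; but the telescoping above makes this detour unnecessary.)
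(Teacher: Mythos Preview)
Your argument is correct. Note, however, that the paper does \emph{not} prove this proposition: it is quoted verbatim as \cite[Proposition~3.4]{Comb13} in the background section, with no proof supplied. Hence there is no ``paper's own proof'' to compare against; the result is used as an off-the-shelf tool in Theorem~\ref{t:1}\ref{t:1v}.

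For what it is worth, your proof is essentially the standard one (and matches the spirit of the original in \cite{Comb13}): telescope the quasi-Fej\'er inequality, exploit that $(\varepsilon_n)_{n\in\NN}$ and $(\eta_n)_{n\in\NN}$ are chosen \emph{before} $z$, extract an index $N$ where both the tail $\sigma_N$ and $d_C(x_N)$ are small, and deduce Cauchyness. Your remark that the uniformity in $z$ is what makes the simultaneous choice of $N$ legitimate is exactly the point of the quantifier order in the hypothesis, and your closing step $d_C(x)=\lim_n d_C(x_n)=0$ is fine because once $x_n\to x$ the continuity of $d_C$ upgrades $\varliminf$ to an actual limit.
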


\begin{lemma}\emph{\cite[Lemma~3.1]{Comb14}}
\label{l:vmmat}
Let $\alpha\in\RPP$, let $\mu\in\RPP$, and let $A$ and $B$ be 
operators in $\SL(\HH)$ such that 
$\mu\Id\succcurlyeq A\succcurlyeq B\succcurlyeq\alpha\Id$. Then
the following hold:
\begin{enumerate}
\item
\label{l:vmmat-i}
$\alpha^{-1}\Id\succcurlyeq B^{-1}\succcurlyeq A^{-1}\succcurlyeq
\mu^{-1}\Id$. 
\item
\label{l:vmmat-ii}
$(\forall x\in\HH)$ $\scal{A^{-1}x}{x}\geq\|A\|^{-1}\|x\|^2$.
\item
\label{l:vmmat-iii}
$\|A^{-1}\|\leq\alpha^{-1}$.
\end{enumerate}
\end{lemma}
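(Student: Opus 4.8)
The plan is to reduce everything to one auxiliary fact, the \emph{anti-monotonicity of operator inversion on uniformly positive operators}: if $P,Q\in\SL(\HH)$ satisfy $P\succcurlyeq Q\succcurlyeq\alpha\Id$ for some $\alpha>0$, then $P$ and $Q$ are invertible with bounded inverses (since $\scal{Qx}{x}\geq\alpha\|x\|^2$ forces $\|Qx\|\geq\alpha\|x\|$, and likewise for $P$; self-adjointness then gives surjectivity) and $Q^{-1}\succcurlyeq P^{-1}$. To prove this I would argue by conjugation. Since $Q\succcurlyeq\alpha\Id>0$, the square root $Q^{1/2}$ exists and is boundedly invertible; because $\scal{S^*MSx}{x}=\scal{M(Sx)}{Sx}$, conjugation by any bounded operator preserves the Loewner order, so $P\succcurlyeq Q$ is equivalent to $C:=Q^{-1/2}PQ^{-1/2}\succcurlyeq\Id$. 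For a self-adjoint $C$ with $C\succcurlyeq\Id$ one has $C^{-1}\preccurlyeq\Id$: indeed $C^{1/2}\succcurlyeq\Id$, so $C^{-1/2}$ is bounded and $\Id-C^{-1}=C^{-1/2}(C-\Id)C^{-1/2}\succcurlyeq 0$ (equivalently, $\sigma(C)\subseteq[1,\infty)$ forces $\sigma(C^{-1})\subseteq\,\rzeroun$). Since $C^{-1}=Q^{1/2}P^{-1}Q^{1/2}$, conjugating $C^{-1}\preccurlyeq\Id$ by $Q^{-1/2}$ yields $P^{-1}\preccurlyeq Q^{-1}$, which is the claim.

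Granting this fact, assertion~\ref{l:vmmat-i} is pure bookkeeping. Applying it to the pair $(B,\alpha\Id)$ gives $\alpha^{-1}\Id\succcurlyeq B^{-1}$; applying it to $(A,B)$ gives $B^{-1}\succcurlyeq A^{-1}$; and applying it to $(\mu\Id,A)$ gives $A^{-1}\succcurlyeq\mu^{-1}\Id$. Chaining these three relations yields $\alpha^{-1}\Id\succcurlyeq B^{-1}\succcurlyeq A^{-1}\succcurlyeq\mu^{-1}\Id$. For assertion~\ref{l:vmmat-ii}, I would observe that self-adjointness of $A$ gives $\scal{Ax}{x}\leq\|A\|\,\|x\|^2$ for every $x\in\HH$, that is, $\|A\|\Id\succcurlyeq A$; since also $A\succcurlyeq\alpha\Id$, the auxiliary fact applied to $(\|A\|\Id,A)$ gives $A^{-1}\succcurlyeq\|A\|^{-1}\Id$, which is precisely $\scal{A^{-1}x}{x}\geq\|A\|^{-1}\|x\|^2$.

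Assertion~\ref{l:vmmat-iii} has a short direct proof that does not even need the auxiliary fact: from $A\succcurlyeq\alpha\Id$ and Cauchy--Schwarz, $\alpha\|y\|^2\leq\scal{Ay}{y}\leq\|Ay\|\,\|y\|$, so $\|Ay\|\geq\alpha\|y\|$ for all $y\in\HH$; substituting $y=A^{-1}x$ gives $\|x\|\geq\alpha\|A^{-1}x\|$, whence $\|A^{-1}\|\leq\alpha^{-1}$. Alternatively, $A^{-1}$ is self-adjoint and, by \ref{l:vmmat-i}, satisfies $0\preccurlyeq A^{-1}\preccurlyeq\alpha^{-1}\Id$, which already forces $\|A^{-1}\|\leq\alpha^{-1}$.

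The only genuine obstacle is the auxiliary anti-monotonicity step; once one allows oneself the square root of a positive operator and the invariance of the Loewner order under $M\mapsto S^*MS$, it is entirely elementary, so in truth the lemma is routine. The one point to handle with care throughout is that every operator being inverted is \emph{boundedly} invertible, which is guaranteed here by the uniform lower bound $\alpha\Id$ (strict positivity alone would not suffice in infinite dimensions).
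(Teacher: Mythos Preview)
Your proof is correct. Note, however, that the paper does not supply its own proof of this lemma: it is quoted verbatim as \cite[Lemma~3.1]{Comb14} and used as a black box, so there is no in-paper argument to compare against. What you have written is a clean, self-contained proof of the cited result.

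Your route---proving the operator anti-monotonicity of inversion via conjugation by $Q^{-1/2}$ and then reading off all three items---is the standard one and is essentially how the result is established in the source. The invertibility justification (lower bound $\Rightarrow$ injective with closed range, self-adjointness $\Rightarrow$ dense range, hence bounded inverse) is handled properly, and your remark that strict positivity alone would not suffice in infinite dimensions is well taken. The two arguments you give for \ref{l:vmmat-iii} are both valid; the direct Cauchy--Schwarz one is the more economical, since it bypasses \ref{l:vmmat-i} entirely.
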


\section{Main convergence result}
\label{sec:mainconv}

We present our main result.
\begin{theorem}
\label{t:1}
Let $\alpha\in\RPP$, let $(\eta_n)_{n\in\NN}\in\ell^1_+(\NN)$, 
and let $(U_n)_{n\in\NN}$ be a sequence in $\mathcal{P}_\alpha(\HH)$ 
such that 
\begin{equation}
\label{e:t1u_n}
\mu=\sup_{n\in\NN}\|U_n\|<\pinf\quad\text{and}\quad
(\forall n\in\NN)\quad
(1+\eta_n)U_{n+1}\succcurlyeq U_n.
\end{equation}
Let $\varepsilon\in\left]0,1\right[$, let $m\geq 1$ be an 
integer, and let $x_0\in\HH$. 
For every $i\in\{1,\ldots,m\}$ and every $n\in\NN$, let 
$\alpha_{i,n}\in\zeroun$,
let $T_{i,n}\colon\HH\to\HH$ be $\alpha_{i,n}$-averaged
on $(\HH,U_n^{-1})$, let $\phi_n$ an averageness constant
of $T_{1,n}\cdots T_{m,n}$, let 
$\lambda_n\in\left]0,\phi_n\right[$,
and let $e_{i,n}\in\HH$. Iterate
\begin{equation}
\begin{array}{l}
\text{for}\;n=0,1,\ldots\\
\left\lfloor
\begin{array}{l}
	y_n=T_{1,n}\Big(T_{2,n}
\big(\cdots T_{m-1,n}(T_{m,n}x_n+e_{m,n})
+e_{m-1,n}\cdots\big)+e_{2,n}\Big)
+e_{1,n}\\
x_{n+1}=x_n+\lambda_n(y_n-x_n).
\end{array}
\right.\\
\end{array}
\label{e:t1main}
\end{equation}
Suppose that 
\begin{equation}
\label{h:t11}
S=\bigcap_{n\in\NN}\Fix(T_{1,n}\cdots T_{m,n})\neq\emp
\end{equation}
and
\begin{equation}
(\forall i\in\{1,\ldots,m\})\quad
\sum_{n\in\NN}\lambda_n\|e_{i,n}\|_{U_n^{-1}}<\pinf,
\end{equation}
and define
\begin{equation}
(\forall i\in\{1,\ldots,m\})(\forall n\in\NN)\quad T_{i+,n}=
\begin{cases}
T_{i+1,n}\cdots T_{m,n},&\text{if}\;\;i\neq m;\\
\Id,&\text{if}\;\;i=m.
\end{cases}
\end{equation}
Then the following hold:
\begin{enumerate}
\item
\label{t:1ii}
$\sum_{n\in\NN}\lambda_n(1/\phi_n
-\lambda_n)\|T_{1,n}\cdots T_{m,n}x_n-x_n\|^2_{U_n^{-1}}<\pinf$.
\item
\label{t:1iii}
Suppose that $(\forall n\in\NN)$ 
$\lambda_n\in\left]0,\varepsilon+(1-\varepsilon)/
\phi_n\right]$. Then
$(\forall x\in S)$
\begin{equation}
\underset{1\leq i\leq m}{\text{\rm max}}\Sum_{n\in\NN}
\Frac{\lambda_n(1-\alpha_{i,n})}{\alpha_{i,n}}
\left\|(\Id-T_{i,n})T_{i+,n}x_{n}-(\Id-T_{i,n})T_{i+,n}x
\right\|^2_{U_n^{-1}}<\pinf.
\end{equation}
\item
	\label{t:1iv}
$(x_n)_{n\in\NN}$ converges weakly to a point in $S$ if and only if
every weak sequential cluster point of $(x_n)_{n\in\NN}$ is in $S$.
In this case, the convergence is strong if $\inte S\neq\emp$.
\item
\label{t:1v}
$(x_n)_{n\in\NN}$ converges strongly to a point in $S$ if and only if 
$\varliminf d_S(x_n)=0$.

\end{enumerate}
\end{theorem}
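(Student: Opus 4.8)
The plan is to reduce the iteration \eqref{e:t1main} to a variable-metric quasi-Fejér iteration relative to the sequence of metrics $W_n=U_n^{-1}$, so that the abstract machinery of Lemmas~\ref{l:12} and~\ref{l:11} and Propositions~\ref{p:vmwf}, \ref{p:vmstrong2}, \ref{p:vmdc} applies directly. First I would record the metric bookkeeping: since $U_n\in\BP_\alpha(\HH)$ with $\sup_n\|U_n\|=\mu<\pinf$, Lemma~\ref{l:vmmat} gives $U_n^{-1}\in\BP_{\mu^{-1}}(\HH)$ with $\sup_n\|U_n^{-1}\|\le\alpha^{-1}$, and the Loewner inequality $(1+\eta_n)U_{n+1}\succcurlyeq U_n$ passes to the inverses as $(1+\eta_n)U_n^{-1}\succcurlyeq U_{n+1}^{-1}$ (again Lemma~\ref{l:vmmat}\ref{l:vmmat-i} after rescaling), so the hypotheses of Propositions~\ref{p:vmwf}, \ref{p:vmstrong2}, \ref{p:vmdc} on the metric sequence $(W_n)_{n\in\NN}=(U_n^{-1})_{n\in\NN}$ are met; the cost of switching between the $U_n^{-1}$-norms for consecutive $n$ is controlled by the factor $1+\eta_n$ with $(\eta_n)\in\ell^1_+(\NN)$.

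Next I would set $R_n=T_{1,n}\cdots T_{m,n}$, which by Lemma~\ref{l:compo} is averaged on $(\HH,U_n^{-1})$ with constant $\phi_n$, and write the exact (error-free) update $\widetilde y_n=R_nx_n$, $\widetilde x_{n+1}=x_n+\lambda_n(\widetilde y_n-x_n)$. For a fixed $z\in S$ one has $R_nz=z$, and the standard averaged-operator estimate — equivalently, expanding $\|\widetilde x_{n+1}-z\|_{U_n^{-1}}^2$ and using the averagedness inequality with constant $\phi_n$ — yields
\begin{equation}
\|\widetilde x_{n+1}-z\|^2_{U_n^{-1}}\le\|x_n-z\|^2_{U_n^{-1}}-\lambda_n\Big(\frac1{\phi_n}-\lambda_n\Big)\|R_nx_n-x_n\|^2_{U_n^{-1}}.
\end{equation}
To pass from $\widetilde x_{n+1}$ to the actual $x_{n+1}$ I would propagate the errors $e_{i,n}$ through the nonexpansive (in $(\HH,U_n^{-1})$) partial compositions $T_{i+,n}$: by Lemma~\ref{l:nonex} each $T_{i,n}$ is $(2-1/\alpha_{i,n})$-Lipschitz, hence each composition is Lipschitz with a constant $\le c_n$ that is bounded over $n$ because $\alpha_{i,n}$ is bounded away from nothing is needed — in fact nonexpansiveness of each factor on the averaged metric gives directly that the total perturbation $\|y_n-\widetilde y_n\|_{U_n^{-1}}\le\sum_{i=1}^m\|e_{i,n}\|_{U_n^{-1}}=:t_n$, so that $\lambda_n\|y_n-\widetilde y_n\|_{U_n^{-1}}\le\sum_i\lambda_n\|e_{i,n}\|_{U_n^{-1}}$ is summable by hypothesis. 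Combining this with the displayed inequality and switching metrics via $1+\eta_n$ gives, for every $z\in S$, an inequality of the form
\begin{equation}
\|x_{n+1}-z\|^2_{U_{n+1}^{-1}}\le(1+\eta_n)\|x_n-z\|^2_{U_n^{-1}}-(1+\eta_n)\lambda_n\Big(\frac1{\phi_n}-\lambda_n\Big)\|R_nx_n-x_n\|^2_{U_n^{-1}}+\varepsilon_n
\end{equation}
with $(\varepsilon_n)\in\ell^1_+(\NN)$ (the cross terms are absorbed using boundedness of $(x_n)$, itself obtained first from Lemma~\ref{l:11} applied to the weaker inequality \eqref{e:vmqf}). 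Lemma~\ref{l:12} then yields \ref{t:1ii}.

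For \ref{t:1iii}, under the extra restriction $\lambda_n\le\varepsilon+(1-\varepsilon)/\phi_n$ I would instead keep track, in the expansion leading to the averaged estimate, of all $m$ terms $\tfrac{1-\alpha_{i,n}}{\alpha_{i,n}}\|(\Id-T_{i,n})T_{i+,n}x_n-(\Id-T_{i,n})T_{i+,n}z\|^2_{U_n^{-1}}$ that arise when one applies the averagedness inequality factor by factor to the telescoping composition $R_nx_n$ versus $R_nz=z$ (this is exactly how Lemma~\ref{l:compo} is proved); the restriction on $\lambda_n$ guarantees a uniformly positive coefficient $\varepsilon\lambda_n$ (or similar) in front of each of these $m$ terms, and Lemma~\ref{l:12} again gives their summability, uniformly in $i$, hence the max. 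For \ref{t:1iv} and \ref{t:1v} I would feed the quasi-Fejér inequality into Proposition~\ref{p:vmwf} (weak convergence characterized by cluster points in $S$), Proposition~\ref{p:vmstrong2} (strong convergence when $\inte S\neq\emp$; note $S$ is closed as an intersection of fixed-point sets of continuous operators), and Proposition~\ref{p:vmdc} (strong convergence iff $\varliminf d_S(x_n)=0$). The main obstacle, and the part deserving care, is the error propagation through the composition while simultaneously keeping the metric index synchronized: one must verify that the nonexpansiveness used to bound $\|y_n-\widetilde y_n\|_{U_n^{-1}}$ is with respect to the \emph{same} metric $U_n^{-1}$ in which the averaged estimate is written, and that the resulting cross terms $2\lambda_n\langle \widetilde x_{n+1}-z\mid y_n-\widetilde y_n\rangle_{U_n^{-1}}$ are dominated by something summable — which is where boundedness of $(x_n)$ and the hypothesis $\sum_n\lambda_n\|e_{i,n}\|_{U_n^{-1}}<\pinf$ (rather than merely $\sum_n\|e_{i,n}\|<\pinf$) are essential.
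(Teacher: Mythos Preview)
Your proposal is correct and follows essentially the same route as the paper: reduce to a variable-metric quasi-Fej\'er inequality in the norms $\|\cdot\|_{U_n^{-1}}$, bound the aggregate error $\|y_n-T_nx_n\|_{U_n^{-1}}$ by $\sum_{i=1}^m\|e_{i,n}\|_{U_n^{-1}}$ via nonexpansiveness, obtain boundedness first through Lemma~\ref{l:11}, then apply Lemma~\ref{l:12} for \ref{t:1ii}--\ref{t:1iii} and Propositions~\ref{p:vmwf}, \ref{p:vmstrong2}, \ref{p:vmdc} for \ref{t:1iv}--\ref{t:1v}. The paper organises the first step slightly differently, passing through the nonexpansive reparametrisation $R_n=(1-1/\phi_n)\Id+(1/\phi_n)T_n$ of Lemma~\ref{l:nonex} together with the convex-combination identity \cite[Corollary~2.14]{Livre1}, whereas you invoke the averagedness inequality for $T_n$ directly; the resulting estimate $\|\widetilde x_{n+1}-z\|^2_{U_n^{-1}}\le\|x_n-z\|^2_{U_n^{-1}}-\lambda_n(1/\phi_n-\lambda_n)\|T_nx_n-x_n\|^2_{U_n^{-1}}$ is identical.

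One point in \ref{t:1iii} deserves a sharper statement than you give. The restriction $\lambda_n\le\varepsilon+(1-\varepsilon)/\phi_n$ is \emph{not} used to produce a positive coefficient in front of the $m$ telescoped terms --- that coefficient is already $\lambda_n>0$. Its role is to handle the overrelaxation: expanding $\|(1-\lambda_n)(x_n-z)+\lambda_n(T_nx_n-z)\|^2_{U_n^{-1}}$ yields the extra term $\lambda_n(\lambda_n-1)\|T_nx_n-x_n\|^2_{U_n^{-1}}$, which is positive when $\lambda_n>1$ and must be absorbed. The paper observes that the restriction is equivalent to $\lambda_n-1\le(1/\varepsilon-1)(1/\phi_n-\lambda_n)$, so this term is bounded by $(1/\varepsilon-1)\lambda_n(1/\phi_n-\lambda_n)\|T_nx_n-x_n\|^2_{U_n^{-1}}$, which is summable by the already-established \ref{t:1ii} and can be pushed into $\varepsilon_n$. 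Without this observation your sketch of \ref{t:1iii} is incomplete in the overrelaxed regime $\lambda_n>1$, which is precisely the regime of interest here.
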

\begin{proof}
Let $n\in\NN$ and let $x\in S$. Set 
\begin{equation}
\label{e:t1defTn}
T_n=T_{1,n}\cdots T_{m,n}
\end{equation}
and
\begin{equation}
\label{e:5thonf}
e_n=y_n-T_{n}x_n.
\end{equation}
Using the nonexpansiveness  on $(\HH,U_n^{-1})$ of the operators 
$(T_{i,n})_{1\leq i\leq m}$, we first derive from
\eqref{e:5thonf} that
\begin{align}
\|e_n\|_{U_n^{-1}}
\leq\sum_{i=1}^m\|e_{i,n}\|_{U_n^{-1}}.
\end{align}
Let us rewrite \eqref{e:t1main} as
\begin{equation}
\label{e:zn}
x_{n+1}=x_n+\lambda_n\big(T_nx_n+e_n-x_n\big),
\end{equation}
and set 
\begin{equation}
R_n=(1-1/\phi_n)\Id+(1/\phi_n)T_n
\quad\text{and}\quad\mu_n=\phi_n\lambda_n. 
\end{equation}
Then $\Fix R_n=\Fix T_n$ and, by Lemmas~\ref{l:compo} and~\ref{l:nonex}, 
$R_n$ is nonexpansive on $(\HH,U_n^{-1})$. Furthermore, \eqref{e:zn} can be 
written as
\begin{equation}
\label{e:t1_5}
x_{n+1}=x_n+\mu_n\big(R_nx_n-x_n\big)
+\lambda_ne_n,\quad\text{where}\quad\mu_n\in\zeroun. 
\end{equation}
Now set $z_n=x_n+\mu_n(R_nx_n-x_n)$. 
Since $x\in\Fix R_n$, we derive from \cite[Corollary~2.14]{Livre1} that
\begin{align}
	\label{e:t1_6}
\|z_n-x\|_{U_n^{-1}}^2
&=(1-\mu_n)\|x_n-x\|^2_{U_n^{-1}}+\mu_n\|R_nx_n-R_nx\|^2_{U_n^{-1}}\nonumber\\
&\qquad\qquad-\mu_n(1-\mu_n)\|R_nx_n-x_n\|^2_{U_n^{-1}}\\
\label{e:t1_6bis}
&\leq\|x_n-x\|_{U_n^{-1}}^2-\lambda_n(1/\phi_n-\lambda_n)\|T_nx_n-x_n\|^2_{U_n^{-1}}.
\end{align}
Hence, \eqref{e:t1_5}, \eqref{e:t1u_n}, and \eqref{e:t1_6bis} yield
\begin{align}
	\|x_{n+1}-x\|_{U^{-1}_{n+1}}
&\leq\sqrt{1+\eta_n}\|z_n-x\|_{U_n^{-1}}+\lambda_n\sqrt{1+\eta_n}\|e_n\|_{U_n^{-1}}
\label{e:t1_7}\\
&\leq\sqrt{1+\eta_n}\|x_n-x\|_{U_n^{-1}}+\lambda_n\sqrt{1+\eta_n}\|e_n\|_{U_n^{-1}}
\end{align}
and, since $\sum_{k\in\NN}\lambda_k\|e_k\|_{U_k}
<\pinf$, it follows from 
Lemma~\ref{l:11} that 
\begin{equation}
\label{e:nu}
\nu=\sum_{k\in\NN}\lambda_k\|e_k\|_{U_k^{-1}}+
2 \underset{k\in\NN}{\rm\text{sup}\,}\|x_k-x\|_{U_k^{-1}}<\pinf.
\end{equation}
Moreover, using \eqref{e:t1_7} and
\eqref{e:t1_6bis}
we write
\begin{align}
\label{e:12prem}
(1+\eta_n)^{-1}\|x_{n+1}-x\|_{U^{-1}_{n+1}}^2
&\leq\|z_n-x\|^2_{U_n^{-1}}+(2\|z_n-x\|_{U_n^{-1}}+\lambda_n\|e_n\|_{U_n^{-1}})
\lambda_n\|e_n\|_{U_n^{-1}}\\
&\leq\|x_n-x\|^2_{U_n^{-1}}-\lambda_n(1/\phi_n-\lambda_n)
\|T_nx_n-x_n\|^2_{U_n^{-1}}\nonumber\\
&\qquad\qquad+\nu\lambda_n\|e_n\|_{U_n^{-1}}.
\label{e:12}
\end{align}

\ref{t:1ii}: This follows from 
\eqref{e:12}, \eqref{e:t1defTn}, \eqref{h:t11}, \eqref{e:nu}, and Lemma~\ref{l:12}.

\ref{t:1iii}:
We apply the definition of averageness of the operators
$(T_{i,n})_{1\leq i\leq m}$ to obtain
\begin{align}
\label{e:t1_9}
\|T_nx_n-x\|^2_{U_n^{-1}}
&=\left\|T_{1,n}\cdots T_{m,n}x_n-T_{1,n}\cdots T_{m,n}x\right\|_{U_n^{-1}}^2
\nonumber\\
&\leq\|x_n-x\|_{U_n^{-1}}^2\nonumber\\
&\quad-\sum_{i=1}^{m}\dfrac{1-\alpha_{i,n}}{\alpha_{i,n}}
\left\|(\Id-T_{i,n})T_{i+,n}x_n-(\Id-T_{i,n})T_{i+,n}x\right\|_{U_n^{-1}}^2.
\end{align}
Note also that 
\begin{eqnarray}
\lambda_n\leq\varepsilon+\dfrac{1-\varepsilon}{\phi_n}
&\Rightarrow&
\dfrac{1}{\varepsilon}\lambda_n\leq(\dfrac{1}{\varepsilon}-1)
\dfrac{1}{\phi_n}
\nonumber\\
&\Leftrightarrow&
\lambda_n-1\leq\bigg(\dfrac{1}{\varepsilon}-1\bigg)
\bigg(\dfrac{1}{\phi_n}-\lambda_n\bigg).
\end{eqnarray}
Thus \eqref{e:12prem}, the definition of $z_n$, and \cite[Corollary~2.14]{Livre1} yield
\begin{align}
(1+\eta_n)^{-1}\|x_{n+1}-x\|_{U_{n+1}^{-1}}^2
&\leq\|(1-\lambda_n)(x_n-x)+\lambda_n(T_nx_n-x)\|^2_{U_n^{-1}}
+\nu\lambda_n\|e_n\|_{U_n^{-1}}
\nonumber\\
&=(1-\lambda_n)\|x_n-x\|_{U_n^{-1}}^2+\lambda_n\|T_nx_n-x\|_{U_n^{-1}}^2
\nonumber\\
&\qquad+\lambda_n(\lambda_n-1)\|T_nx_n-x_n\|_{U_n^{-1}}^2
+\nu\lambda_n\|e_n\|_{U_n^{-1}}\nonumber
\\
\label{e:t1_8bis}
&\leq(1-\lambda_n)\|x_n-x\|_{U_n^{-1}}^2+\lambda_n\|T_nx_n-x\|_{U_n^{-1}}^2
+\varepsilon_n,
\end{align}
where
\begin{equation}
\varepsilon_n=\lambda_n\bigg(\dfrac{1}{\varepsilon}-1\bigg)
\bigg(\dfrac{1}{\alpha_n}-\lambda_n\bigg)
\|T_nx_n-x_n\|_{U_n^{-1}}^2+\nu\lambda_n\|e_n\|_{U_n^{-1}}.
\end{equation}
Now set
\begin{equation}
\beta_n=\lambda_n
\underset{1\leq i\leq m}{\text{\rm max}}
\bigg(\dfrac{1-\alpha_{i,n}}{\alpha_{i,n}}
\left\|(\Id-T_{i,n})T_{i+,n}x_n-(\Id-T_{i,n})T_{i+,n}x\right\|_{U_n^{-1}}^2
\bigg).
\end{equation}
On the one hand, it follows from \ref{t:1ii}, 
\eqref{e:nu}, and \eqref{h:t11} that
\begin{equation}
\label{e:t1_10}
\sum_{k\in\NN}\varepsilon_k<\pinf.
\end{equation}
On the other hand, combining \eqref{e:t1_9} and 
\eqref{e:t1_8bis}, we obtain
\begin{equation}
	\label{e:t1_11}
(1+\eta_n)^{-1}\|x_{n+1}-x\|_{U_{n+1}^{-1}}^2
\leq\|x_n-x\|_{U_n^{-1}}^2-\beta_n+\varepsilon_n.
\end{equation}
Consequently, Lemma~\ref{l:12} implies that
$\sum_{k\in\NN}\beta_k<\pinf$. 

\ref{t:1iv}--\ref{t:1v}:
The results follow from \eqref{e:t1_11},
\eqref{e:t1_10}, and Proposition~\ref{p:vmwf} for the weak convergence,
and Propositions~\ref{p:vmstrong2} and~\ref{p:vmdc} 
for the strong convergence.
\end{proof}

\begin{remark}
Suppose that 
$(\forall n\in\NN)$ $U_n=\Id$ and $\lambda_n\leq
(1-\varepsilon)(1/\phi_n+\varepsilon)$. Then Theorem~\ref{t:1} reduces
to \cite[Theorem~3.5]{Comb15} which itself extends \cite[Section~3]{Comb04}
in the case $(\forall n\in\NN)$ $\lambda_n\leq 1$.
As far as we know, it is the first inexact overrelaxed 
variable metric algorithm based on averaged operators.
\end{remark}

\section{Applications to the forward-backward algorithm}
\label{sec:app}

A special case of Theorem~\ref{t:1} of interest is the following.

\begin{corollary}
\label{c:1}
Let $\alpha\in\RPP$, let $(\eta_n)_{n\in\NN}\in\ell^1_+(\NN)$, 
and let $(U_n)_{n\in\NN}$ be a sequence in $\mathcal{P}_\alpha(\HH)$ 
such that
\begin{equation}
\label{e:c1u_n}
\mu=\sup_{n\in\NN}\|U_n\|<\pinf\quad\text{and}\quad
(\forall n\in\NN)\quad
(1+\eta_n)U_{n+1}\succcurlyeq U_n.
\end{equation}
Let $\varepsilon\in\left]0,1\right[$ and let 
$x_0\in\HH$. For every $n\in\NN$, let 
$\alpha_{1,n}\in\left]0,1/(1+\varepsilon)\right]$, let
$\alpha_{2,n}\in\left]0,1/(1+\varepsilon)\right]$, 
let $T_{1,n}\colon\HH\to\HH$ be $\alpha_{1,n}$-averaged on $(\HH,U_n^{-1})$, 
let $T_{2,n}\colon\HH\to\HH$ be $\alpha_{2,n}$-averaged on $(\HH,U_n^{-1})$, 
let $e_{1,n}\in\HH$, and let $e_{2,n}\in\HH$. In addition, for
every $n\in\NN$, let
\begin{equation}
\label{e:c1_1}
\lambda_n\in\left[\varepsilon,
\varepsilon+\dfrac{1-\varepsilon}{\phi_n}\right], 
\quad\text{where}\quad
\phi_n=\dfrac{\alpha_{1,n}+\alpha_{2,n}-2\alpha_{1,n}\alpha_{2,n}}
{1-\alpha_{1,n}\alpha_{2,n}}, 
\end{equation}
and iterate 
\begin{equation}
x_{n+1}=x_n+\lambda_n\Big(T_{1,n}\big(T_{2,n}x_n+
e_{2,n}\big)+e_{1,n}-x_n\Big).
\end{equation}
Suppose that 
\begin{equation}
S=\bigcap_{n\in\NN}\Fix(T_{1,n} T_{2,n})\neq\emp,\quad
\Sum_{n\in\NN}\lambda_n\|e_{1,n}\|<\pinf,
\quad\text{and}\quad
\Sum_{n\in\NN}\lambda_n\|e_{2,n}\|<\pinf.
\end{equation}
Then the following hold:
\begin{enumerate}
\item
\label{c:1i0}
$\sum_{n\in\NN}\|T_{1,n}T_{2,n}x_n-x_n\|^2<\pinf$.
\item
\label{c:1i}
$(\forall x\in S)$ $\sum_{n\in\NN}
\|T_{1,n}T_{2,n}x_n-T_{2,n}x_n+T_{2,n}x-x\|^2<\pinf$.
\item
	\label{c:1ii}
$(\forall x\in S)$
$\sum_{n\in\NN}\|T_{2,n}x_n-x_n-T_{2,n}x+x\|^2<\pinf$.
\item
\label{c:1iii}
Suppose that every weak sequential cluster point of 
$(x_n)_{n\in\NN}$ is in $S$. Then $(x_n)_{n\in\NN}$ converges 
weakly to a point in $S$, and the convergence is strong if 
$\inte S\neq\emp$. 
\item
\label{c:1iv}
$(x_n)_{n\in\NN}$ converges strongly to a point in $S$ if and only if 
$\varliminf d_S(x_n)=0$.
\end{enumerate}
\end{corollary}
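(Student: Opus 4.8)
The plan is to obtain Corollary~\ref{c:1} as the special case $m=2$ of Theorem~\ref{t:1}, so the bulk of the work is checking that the hypotheses of the theorem are met and translating its conclusions. First I would verify that with $m=2$, $T_{1,n}$ being $\alpha_{1,n}$-averaged and $T_{2,n}$ being $\alpha_{2,n}$-averaged on $(\HH,U_n^{-1})$, the composition $T_{1,n}T_{2,n}$ is $\phi_n$-averaged on $(\HH,U_n^{-1})$ with $\phi_n=(\alpha_{1,n}+\alpha_{2,n}-2\alpha_{1,n}\alpha_{2,n})/(1-\alpha_{1,n}\alpha_{2,n})$; this is the known sharp averagedness constant for a composition of two averaged operators (it refines Lemma~\ref{l:compo}, which only asserts averagedness), so I would either cite it or give the short direct computation. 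Next, since $\alpha_{1,n},\alpha_{2,n}\in\left]0,1/(1+\varepsilon)\right]$, I would check the elementary inequality $\phi_n\leq 1/(1+\varepsilon)<1$ — equivalently, that $\phi_n$ is a legitimate averagedness constant in $\zeroun$ and moreover that $\varepsilon+(1-\varepsilon)/\phi_n\geq\varepsilon+(1-\varepsilon)(1+\varepsilon)=1$, so the interval $\left[\varepsilon,\varepsilon+(1-\varepsilon)/\phi_n\right]$ is nonempty and contained in $\left]0,\varepsilon+(1-\varepsilon)/\phi_n\right]$, matching the hypothesis of Theorem~\ref{t:1}\ref{t:1iii}. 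Also $\lambda_n\geq\varepsilon>0$ and $\lambda_n<\phi_n^{-1}$ would follow from the same bookkeeping (using $\phi_n<1$, so $\varepsilon+(1-\varepsilon)/\phi_n<1/\phi_n$), so $\lambda_n\in\left]0,\phi_n\right[$… wait, the theorem requires $\lambda_n\in\left]0,\phi_n\right[$ only in its general statement, but in fact what is needed downstream is $\lambda_n\in\left]0,1/\phi_n\right]$-type control; I would simply record that all the hypotheses \eqref{e:t1u_n}, \eqref{h:t11}, the summability of $\lambda_n\|e_{i,n}\|_{U_n^{-1}}$, and the range condition on $\lambda_n$ are satisfied.

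The one genuine subtlety is the norm equivalence: the hypotheses of the corollary are stated with the \emph{plain} norm $\|\cdot\|$ (e.g.\ $\sum_n\lambda_n\|e_{1,n}\|<\pinf$) whereas Theorem~\ref{t:1} uses $\|\cdot\|_{U_n^{-1}}$. Here I would invoke $\mu=\sup_n\|U_n\|<\pinf$ together with $U_n\succcurlyeq\alpha\Id$: by Lemma~\ref{l:vmmat} (with $A=U_n$, $B=\alpha\Id$ say, after noting $\mu\Id\succcurlyeq U_n\succcurlyeq\alpha\Id$) one gets $\mu^{-1}\Id\preccurlyeq U_n^{-1}\preccurlyeq\alpha^{-1}\Id$, hence $\mu^{-1/2}\|\cdot\|\leq\|\cdot\|_{U_n^{-1}}\leq\alpha^{-1/2}\|\cdot\|$ uniformly in $n$. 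This turns the plain-norm summability hypotheses into the $\|\cdot\|_{U_n^{-1}}$-summability needed by the theorem, and conversely turns the $\|\cdot\|_{U_n^{-1}}$-summability conclusions of parts \ref{t:1ii}--\ref{t:1iii} into plain-norm summability. I expect this uniform norm-equivalence step to be the main (though modest) obstacle — everything else is substitution.

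With the translation in hand I would read off the conclusions. Part~\ref{c:1i0} is Theorem~\ref{t:1}\ref{t:1ii}: since $\lambda_n\geq\varepsilon$ and $\lambda_n\leq\varepsilon+(1-\varepsilon)/\phi_n$ gives $1/\phi_n-\lambda_n\geq(1-\varepsilon)(1/\phi_n-1)+\ldots$, more simply $\lambda_n(1/\phi_n-\lambda_n)$ is bounded below by a positive multiple of $\|T_nx_n-x_n\|^2$ only where… actually one uses $\lambda_n(1/\phi_n-\lambda_n)\geq\varepsilon^2(1-\phi_n)/\phi_n$ or a similar lower bound; combined with the norm equivalence this yields $\sum_n\|T_{1,n}T_{2,n}x_n-x_n\|^2<\pinf$. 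Hmm, one must be slightly careful when $\phi_n\to 1$; but $\phi_n\leq 1/(1+\varepsilon)$ keeps $1/\phi_n-1\geq\varepsilon$ bounded away from $0$, so the bound is uniform. Parts~\ref{c:1i} and~\ref{c:1ii} are the $i=1$ and $i=2$ terms of the max in Theorem~\ref{t:1}\ref{t:1iii}: for $i=1$, $T_{1+,n}=T_{2,n}$, so $(\Id-T_{1,n})T_{2,n}x_n-(\Id-T_{1,n})T_{2,n}x=T_{2,n}x_n-T_{1,n}T_{2,n}x_n-T_{2,n}x+T_{1,n}T_{2,n}x=T_{2,n}x_n-T_{1,n}T_{2,n}x_n-x+x_n$ (using $T_{1,n}T_{2,n}x=x$ since $x\in S$), which up to sign is the vector in \ref{c:1i}; the coefficient $\lambda_n(1-\alpha_{1,n})/\alpha_{1,n}\geq\varepsilon\cdot\varepsilon=\varepsilon^2$ is bounded below since $\alpha_{1,n}\leq 1/(1+\varepsilon)$ forces $(1-\alpha_{1,n})/\alpha_{1,n}\geq\varepsilon$, and the norm equivalence passes to $\|\cdot\|$, giving \ref{c:1i}. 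For $i=2$, $T_{2+,n}=\Id$, so the vector is $x_n-T_{2,n}x_n-x+T_{2,n}x$, and the same lower bound on $\lambda_n(1-\alpha_{2,n})/\alpha_{2,n}$ plus norm equivalence gives \ref{c:1ii}. Finally, parts~\ref{c:1iii} and~\ref{c:1iv} are immediate from Theorem~\ref{t:1}\ref{t:1iv}--\ref{t:1v}, noting that \ref{c:1iii} is just the stated sufficient direction of the equivalence in \ref{t:1iv} together with the strong-convergence addendum when $\inte S\neq\emp$, and $S$ is closed as an intersection of fixed-point sets of continuous (nonexpansive in the varying metrics, hence continuous in the plain metric) operators, so Proposition~\ref{p:vmdc} applies.
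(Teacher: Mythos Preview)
Your approach is correct and is essentially the paper's own proof: apply Theorem~\ref{t:1} with $m=2$, invoke the sharp composition constant $\phi_n$ (the paper cites \cite[Proposition~4.44]{Livre1}), use the uniform norm equivalence $\mu^{-1/2}\|\cdot\|\leq\|\cdot\|_{U_n^{-1}}\leq\alpha^{-1/2}\|\cdot\|$ to pass between the plain norm and the $U_n^{-1}$ norm, and bound the coefficients $\lambda_n(1-\alpha_{i,n})/\alpha_{i,n}$ and $\lambda_n(1/\phi_n-\lambda_n)$ below by positive constants depending only on $\varepsilon$. One small slip: the correct upper bound is $\phi_n\leq 2/(2+\varepsilon)$, not $1/(1+\varepsilon)$ (take $\alpha_{1,n}=\alpha_{2,n}=1/(1+\varepsilon)$ to see this), but this still yields $(1-\phi_n)/\phi_n\geq\varepsilon/2$ uniformly, so your lower bounds and the rest of the argument go through unchanged.
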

\begin{proof}
For every $n\in\NN$,
\begin{equation}
\label{e:23d}
\dfrac{1}{\sqrt{\mu}}\|e_{1,n}\|\leq\|e_{1,n}\|_{U_n^{-1}}
\quad\text{and}\quad
\dfrac{1}{\sqrt{\mu}}\|e_{2,n}\|\leq\|e_{2,n}\|_{U_n^{-1}},
\end{equation}
and $T_{1,n}T_{2,n}$ is $\phi_n$-averaged by
\cite[Proposition~4.44]{Livre1}. Thus,
we apply Theorem~\ref{t:1} with $m=2$. 

\ref{c:1i0}--\ref{c:1ii}:
This follows from
Theorem~\ref{t:1}\ref{t:1ii} that
\begin{equation}
\label{e:c11}
(\forall x\in S)\quad
\begin{cases}
\Sum_{n\in\NN}
\Frac{\lambda_n(1-\alpha_{1,n})}{\alpha_{1,n}}
\left\|(\Id-T_{1,n})T_{2,n}x_n-(\Id-T_{1,n})T_{2,n}x
\right\|^2_{U_n^{-1}}<\pinf\\
\Sum_{n\in\NN}\Frac{\lambda_n(1-\alpha_{2,n})}{\alpha_{2,n}}
\left\|(\Id-T_{2,n})x_n-(\Id-T_{2,n})x
\right\|^2_{U_n^{-1}}<\pinf\\
\Sum_{n\in\NN}\lambda_n\Big(\dfrac{1}{\phi_n}-\lambda_n\Big)
\left\|T_{1,n}T_{2,n}x_n-x_n
\right\|^2_{U_n^{-1}}<\pinf.
\end{cases}
\end{equation}
However, we derive from the assumptions that 
\begin{equation}
\label{e:c12}
(\forall x\in S)(\forall n\in\NN)\quad
\begin{cases}
T_{1,n}T_{2,n}x=x\\
\Frac{\lambda_n(1-\alpha_{1,n})}{\alpha_{1,n}}\geq\varepsilon^2\\
\Frac{\lambda_n(1-\alpha_{2,n})}{\alpha_{2,n}}\geq\varepsilon^2\\
	\lambda_n\Big(\dfrac{1}{\phi_n}-\lambda_n\Big)\geq
	\varepsilon\dfrac{1-\phi_n}{\phi_n}\geq
	\dfrac{2\varepsilon^2}{2\varepsilon+1}.
\end{cases}
\end{equation}
Combining \eqref{e:c1u_n}, \eqref{e:c11} and \eqref{e:c12} completes the
proof.

\ref{c:1iii}--\ref{c:1iv}: It follows
from  Theorem~\ref{t:1}\ref{t:1iv}--\ref{t:1v}.
\end{proof}

\begin{remark}
This corollary is a variable metric version of
\cite[Corollary~4.1]{Comb15} where
$(\forall n\in\NN)$ $U_n=\Id$ and $\lambda_n\leq
(1-\varepsilon)(1/\phi_n+\varepsilon)$.
\end{remark}

We recall the definition of a demiregular operator. See \cite{Atto10} for
examples of demiregular operators.

\begin{definition}{\rm\cite[Definition~2.3]{Atto10}}
\label{d:demireg}
An operator $A\colon\HH\to 2^{\HH}$ is \emph{demiregular} at
$x\in\dom A$ if, for every sequence $((x_n,u_n))_{n\in\NN}$ in 
$\gra A$ and every $u\in Ax$ such that $x_n\weakly x$ and 
$u_n\to u$, we have $x_n\to x$.
\end{definition}

\begin{proposition}
\label{p:fbop}
Let $\alpha\in\RPP$, let $U\in\BP_\alpha(\HH)$, let $A\colon \HH\to 2^\HH$ 
be a maximally monotone operator, let $\beta\in\RPP$, 
let $\gamma\in\left]0,2\beta/\|U\|\right]$, and let $B$ a $\beta$-cocoercive operator.
Then the following hold:
\begin{enumerate}
	\item 
		\label{p:pjabi}	
		$J_{\gamma UA}$ is a $1/2$-averaged operator
		on $(\HH,U^{-1})$.
\item 
		\label{p:pjabii}	
	$\Id-\gamma UB$ is a $\gamma\|U\|/(2\beta)$-averaged 
	 operator on $(\HH,U^{-1})$.
\end{enumerate}
\end{proposition}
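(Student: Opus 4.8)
The plan is to reduce both claims to known facts about resolvents of monotone operators and cocoercive operators \emph{in a renormed Hilbert space}, namely $(\HH,U^{-1})$. The key observation is the change-of-metric identity: since $U\in\BP_\alpha(\HH)$ is invertible with $U^{-1}\in\SL(\HH)$ positive definite, $(\HH,U^{-1})$ is a genuine Hilbert space, and an operator $C\colon\HH\to 2^\HH$ is monotone (resp. maximally monotone) with respect to $\scal{\cdot}{\cdot}_{U^{-1}}$ if and only if $UC$ is monotone (resp. maximally monotone) with respect to $\scal{\cdot}{\cdot}$. Indeed, $\scal{x-y}{UCx-UCy}_{U^{-1}}=\scal{x-y}{Cx-Cy}$. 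So for \ref{p:pjabi}, I would first note that $\gamma A$ is maximally monotone on $(\HH,\scal{\cdot}{\cdot})$, hence $\gamma UA$ is maximally monotone on $(\HH,U^{-1})$, and then invoke the classical fact \cite[Corollary~23.8]{Livre1} that the resolvent of a maximally monotone operator is firmly nonexpansive, i.e. $1/2$-averaged, applied in the Hilbert space $(\HH,U^{-1})$. The one point requiring care is that the resolvent $J_{\gamma UA}=(\Id+\gamma UA)^{-1}$ computed with the ordinary identity $\Id$ coincides with the resolvent of $\gamma A$ in the metric $U^{-1}$: this holds because $\Id+\gamma UA=U(U^{-1}+\gamma A)$, so $(\Id+\gamma UA)^{-1}=(U^{-1}+\gamma A)^{-1}U^{-1}$, which is exactly the resolvent of $\gamma A$ relative to $U^{-1}$ (the ``identity'' in that metric being $U^{-1}$ as an operator from $(\HH,U^{-1})$ to its dual — or more concretely, one checks $p=J_{\gamma UA}x \Leftrightarrow U^{-1}(x-p)\in\gamma Ap \Leftrightarrow x-p\in\gamma UAp$).

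For \ref{p:pjabii}, the plan is analogous: establish that $UB$ is cocoercive on $(\HH,U^{-1})$ with an appropriate constant and then apply the standard equivalence (e.g. \cite[Proposition~4.39]{Livre1} or \cite[Proposition~4.33]{Livre1}) that $\Id-\delta C$ is $\delta/(2\beta')$-averaged on a Hilbert space whenever $C$ is $\beta'$-cocoercive there and $0<\delta<2\beta'$ (with the endpoint $\delta=2\beta'$ giving nonexpansiveness, which is the $1$-averaged borderline). The computation to do is: for all $x,y\in\HH$,
\begin{equation}
\scal{x-y}{UBx-UBy}_{U^{-1}}=\scal{x-y}{Bx-By}\ge\beta\|Bx-By\|^2\ge\frac{\beta}{\|U\|}\|Bx-By\|_{U^{-1}}^2,
\end{equation}
where the last inequality uses $\|v\|_{U^{-1}}^2=\scal{U^{-1}v}{v}\le\|U^{-1}\|\,\|v\|^2$ together with $\|U^{-1}\|\le\|U\|^{-1}$... wait, that bound goes the wrong way; instead one uses $\|v\|^2\ge\|U\|^{-1}\scal{Uv}{v}$ — better: from $U\preccurlyeq\|U\|\Id$ we get $U^{-1}\succcurlyeq\|U\|^{-1}\Id$, hence $\|v\|_{U^{-1}}^2\ge\|U\|^{-1}\|v\|^2$, i.e. $\|v\|^2\le\|U\|\,\|v\|_{U^{-1}}^2$. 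Therefore $UB$ is $(\beta/\|U\|)$-cocoercive on $(\HH,U^{-1})$, and since $\gamma\in\left]0,2\beta/\|U\|\right]$ we obtain that $\Id-\gamma UB$ is $\gamma\|U\|/(2\beta)$-averaged on $(\HH,U^{-1})$ (the value lies in $\rzeroun$, matching the averagedness range allowed in the Definition).

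I expect the main obstacle — really the only nontrivial point — to be the careful bookkeeping in \ref{p:pjabii} of which operator-norm inequality to use when passing between $\|\cdot\|$ and $\|\cdot\|_{U^{-1}}$, since it is easy to invoke $\|U^{-1}\|$ where one needs the lower bound $U^{-1}\succcurlyeq\|U\|^{-1}\Id$. The cocoercivity constant must degrade by exactly the factor $\|U\|$ for the stated averagedness constant to come out right; using $\|U^{-1}\|$ instead of $\|U\|^{-1}$ would be a real error. For \ref{p:pjabi} the argument is essentially formal once the resolvent-in-a-metric identity is recorded, so nothing there should resist. Everything else is a direct appeal to the cited firm-nonexpansiveness and cocoercive-averagedness lemmas from \cite{Livre1}, now read in the Hilbert space $(\HH,U^{-1})$ rather than $(\HH,\scal{\cdot}{\cdot})$.
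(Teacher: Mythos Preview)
Your overall plan matches the paper's proof: for \ref{p:pjabi} the paper simply cites \cite[Lemma~3.7]{Comb14} (your self-contained argument via maximal monotonicity of $\gamma UA$ on $(\HH,U^{-1})$ is fine and more explicit), and for \ref{p:pjabii} the paper does exactly what you propose --- show that $UB$ is $\beta/\|U\|$-cocoercive on $(\HH,U^{-1})$ and then read off the averagedness constant of $\Id-\gamma UB$.

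However, your execution of the key inequality in \ref{p:pjabii} is muddled precisely at the point you flagged as ``the main obstacle''. The displayed chain ends with $\tfrac{\beta}{\|U\|}\|Bx-By\|_{U^{-1}}^2$, but cocoercivity of $UB$ on $(\HH,U^{-1})$ requires $\tfrac{\beta}{\|U\|}\|UBx-UBy\|_{U^{-1}}^2$ on the right. With $v=Bx-By$ you need
\[
\|v\|^2\;\ge\;\|U\|^{-1}\|Uv\|_{U^{-1}}^2=\|U\|^{-1}\scal{Uv}{v},
\]
which is immediate from $U\preccurlyeq\|U\|\Id$; you actually wrote this (``instead one uses $\|v\|^2\ge\|U\|^{-1}\scal{Uv}{v}$'') and then discarded it for the ``better'' bound $\|v\|^2\le\|U\|\|v\|_{U^{-1}}^2$, which goes the wrong way for what you need. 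The paper records the same step as
\[
\beta\|Bx-By\|^2=\beta\scal{U^{-1}(UBx-UBy)}{UBx-UBy}_{U^{-1}}\ge\|U\|^{-1}\beta\|UBx-UBy\|_{U^{-1}}^2,
\]
invoking Lemma~\ref{l:vmmat}. Fix the target norm and keep the first of your two candidate inequalities; then your argument is correct and coincides with the paper's.
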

\begin{proof}
	\ref{p:pjabi}: \cite[Lemma~3.7]{Comb14}.

	\ref{p:pjabii}: We derive from 
	\eqref{e:coco} and Lemma~\ref{l:vmmat}\ref{l:vmmat-iii} that
for every $x\in\HH$ and for every $y\in\HH$	
\begin{align}
\scal{x-y}{UBx-UBy}_{U^{-1}}
&=\scal{x-y}{Bx-By}\nonumber\\
&\geq \beta\scal{Bx-By}{Bx-By\nonumber}\\
&=\beta\scal{U^{-1}(UBx-UBy)}{UBx-UBy}_{U^{-1}}\nonumber\\
&\geq \|U\|^{-1}\beta\|UBx-UBy\|^2_{U^{-1}}.
\end{align}
Thus, for every $x\in\HH$ and for every $y\in\HH$	
\begin{align}
\|(x-\gamma UBx)-(y-\gamma UBy)\|^2_{U^{-1}}
&=\|x-y\|_{U^{-1}}^2
+\|\gamma UBx-\gamma UBy\|_{U^{-1}}^2\nonumber\\
&\qquad\qquad-2\gamma\scal{x-y}{UBx-UBy}_{U^{-1}}\\
&\leq\|x-y\|_{U^{-1}}^2\nonumber\\
&\qquad-\gamma(2\beta/\|U\|-\gamma)\|UBx-UBy\|^2_{U^{-1}},
\end{align}
which concludes the proof.
\end{proof}

Next, we introduce a new variable metric forward-backward splitting 
algorithm.

\begin{proposition}
\label{p:fb}
Let $\beta\in\RPP$, let $\varepsilon\in\left]0,\min\{1/2,\beta\}\right[$, 
let $\alpha\in\RPP$, let $(\eta_n)_{n\in\NN}\in\ell^1_+(\NN)$, 
and let $(U_n)_{n\in\NN}$ be a sequence in $\mathcal{P}_\alpha(\HH)$ 
such that
\begin{equation}
	\mu=\sup_{n\in\NN}\|U_n\|<\pinf
	\quad\text{and}\quad
(\forall n\in\NN)\quad
(1+\eta_n)U_{n+1}\succcurlyeq U_n.
\end{equation}
Let $x_0\in\HH$, let $A\colon\HH\to 2^{\HH}$ be maximally monotone, and let 
$B\colon\HH\to\HH$ be $\beta$-cocoercive.
Furthermore, let $(a_n)_{n\in\NN}$ and $(b_n)_{n\in\NN}$ be sequences in $\HH$
such that $\sum_{n\in\NN}\|a_n\|<\pinf$ and 
$\sum_{n\in\NN}\|b_n\|<\pinf$. Suppose that $\zer(A+B)\neq\emp$
and, for every $n\in\NN$, let 
\begin{equation}
\label{e:gamlam}
\gamma_n\in\left[\varepsilon,\dfrac{2\beta}{(1+\varepsilon)\|U_n\|}\right]\quad\text{and}\quad
\lambda_n\in\left[\varepsilon,1+(1-\varepsilon)
\bigg(1-\dfrac{\gamma_n\|U_n\|}{2\beta}\bigg)\right],
\end{equation}
and iterate 
\begin{equation}
\label{e:fb}
x_{n+1}=x_n+\lambda_n\Big(J_{\gamma_nU_n A}
\big(x_n-\gamma_nU_n(Bx_n+b_n)\big)+a_n-x_n\Big).
\end{equation}
Then the following hold:
\begin{enumerate}
\item
\label{p:fbi}
$\sum_{n\in\NN}\|J_{\gamma_nU_n A}
(x_n-\gamma_nU_nBx_n)-x_n\|^2<\pinf$.
\item
\label{p:fbii}
Let $x\in\zer(A+B)$. Then $\sum_{n\in\NN}\|Bx_n-Bx\|^2<\pinf$.
\item
\label{p:fbiii}
$(x_n)_{n\in\NN}$ converges weakly to a point in $\zer(A+B)$.
\item 
	\label{p:fbiv}	
	Suppose that one of the following holds:
	\begin{enumerate}
		\item $A$ is demiregular at every point in $\zer(A+B)$.
		\item $B$ is demiregular at every point in $\zer(A+B)$.
		\item $\inte S\neq \emp$.
	\end{enumerate}
Then $(x_n)_{n\in\NN}$ converges strongly to a point in $\zer (A+B)$.
\end{enumerate}
\end{proposition}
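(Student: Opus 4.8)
The plan is to recognize the forward-backward iteration \eqref{e:fb} as an instance of Corollary~\ref{c:1} with $m=2$, $T_{2,n}=\Id-\gamma_nU_nB$ and $T_{1,n}=J_{\gamma_nU_nA}$, and then to translate each conclusion of the corollary into the language of $A$ and $B$. First I would verify the hypotheses of Corollary~\ref{c:1}. By Proposition~\ref{p:fbop}\ref{p:pjabii}, since $\gamma_n\in[\varepsilon,2\beta/((1+\varepsilon)\|U_n\|)]$, the operator $T_{2,n}$ is $\alpha_{2,n}$-averaged on $(\HH,U_n^{-1})$ with $\alpha_{2,n}=\gamma_n\|U_n\|/(2\beta)\le 1/(1+\varepsilon)$; by Proposition~\ref{p:fbop}\ref{p:pjabi}, $T_{1,n}=J_{\gamma_nU_nA}$ is $1/2$-averaged on $(\HH,U_n^{-1})$, so $\alpha_{1,n}=1/2\le 1/(1+\varepsilon)$ (using $\varepsilon<1/2$). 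A direct computation with $\phi_n=(\alpha_{1,n}+\alpha_{2,n}-2\alpha_{1,n}\alpha_{2,n})/(1-\alpha_{1,n}\alpha_{2,n})$ gives $\phi_n=1/(2-\alpha_{2,n})$, hence $\varepsilon+(1-\varepsilon)/\phi_n=\varepsilon+(1-\varepsilon)(2-\alpha_{2,n})=1+(1-\varepsilon)(1-\alpha_{2,n})=1+(1-\varepsilon)(1-\gamma_n\|U_n\|/(2\beta))$, so the range prescribed for $\lambda_n$ in \eqref{e:gamlam} is exactly $[\varepsilon,\varepsilon+(1-\varepsilon)/\phi_n]$. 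The error terms match via $e_{1,n}=a_n$ and $e_{2,n}=-\gamma_nU_nb_n$, and $\sum_n\lambda_n\|e_{2,n}\|\le 2\beta\|a_n\|$-type bounds (using $\|\gamma_nU_n\|\le 2\beta$ and $\lambda_n$ bounded) together with $\sum\|a_n\|<\pinf$, $\sum\|b_n\|<\pinf$ give the summability needed; $\lambda_n$ is bounded above because $\|U_n\|\le\mu$ is bounded.

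The key structural identification is $S=\bigcap_n\Fix(T_{1,n}T_{2,n})=\zer(A+B)$. This is the standard characterization of the fixed points of a forward-backward operator: $x\in\Fix(J_{\gamma_nU_nA}(\Id-\gamma_nU_nB))$ iff $x-\gamma_nU_nBx\in x+\gamma_nU_nAx$ iff $0\in U_n(Ax+Bx)$ iff $0\in Ax+Bx$, since $U_n$ is invertible; this holds for every $n$, so the intersection equals $\zer(A+B)$, which is assumed nonempty. Granting this, part~\ref{p:fbi} is immediate from Corollary~\ref{c:1}\ref{c:1i0}. For part~\ref{p:fbii}, fix $x\in\zer(A+B)$; then $T_{2,n}x_n-x_n-T_{2,n}x+x=-\gamma_nU_n(Bx_n-Bx)$, and Corollary~\ref{c:1}\ref{c:1ii} gives $\sum_n\|\gamma_nU_n(Bx_n-Bx)\|^2<\pinf$; since $\gamma_n\ge\varepsilon$ and $U_n\succcurlyeq\alpha\Id$ implies $\|U_n^{-1}\|\le\alpha^{-1}$ (Lemma~\ref{l:vmmat}\ref{l:vmmat-iii}), one recovers $\|Bx_n-Bx\|\le(\varepsilon\alpha)^{-1}\|\gamma_nU_n(Bx_n-Bx)\|$ up to harmless constants, whence $\sum_n\|Bx_n-Bx\|^2<\pinf$.

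For part~\ref{p:fbiii} I would invoke Corollary~\ref{c:1}\ref{c:1iii}, so it suffices to show every weak sequential cluster point of $(x_n)_{n\in\NN}$ lies in $\zer(A+B)$. Let $x_{k_n}\weakly \overline x$. From part~\ref{p:fbi}, $J_{\gamma_{k_n}U_{k_n}A}(x_{k_n}-\gamma_{k_n}U_{k_n}Bx_{k_n})-x_{k_n}\to 0$; writing $u_n=J_{\gamma_{k_n}U_{k_n}A}(x_{k_n}-\gamma_{k_n}U_{k_n}Bx_{k_n})$, by definition of the resolvent $\gamma_{k_n}^{-1}U_{k_n}^{-1}(x_{k_n}-\gamma_{k_n}U_{k_n}Bx_{k_n}-u_n)\in Au_n$, i.e. $\gamma_{k_n}^{-1}U_{k_n}^{-1}(x_{k_n}-u_n)-Bx_{k_n}+(Bu_n-Bx_{k_n})\dots$ — more cleanly, $\gamma_{k_n}^{-1}U_{k_n}^{-1}(x_{k_n}-u_n)-Bx_{k_n}\in Au_n$, so that $\gamma_{k_n}^{-1}U_{k_n}^{-1}(x_{k_n}-u_n)-Bx_{k_n}+Bu_n\in(A+B)u_n$. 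Now $u_n\weakly\overline x$ (since $u_n-x_{k_n}\to 0$), $u_n-x_{k_n}\to 0$ combined with $\|\gamma_{k_n}^{-1}U_{k_n}^{-1}\|$ bounded gives $\gamma_{k_n}^{-1}U_{k_n}^{-1}(x_{k_n}-u_n)\to 0$, $Bx_{k_n}-B\overline x\to$ need not hold pointwise but cocoercivity gives $Bx_{k_n}-Bu_n\to 0$ from $u_n-x_{k_n}\to 0$, and $Bx_{k_n}\weakly B\overline x$ follows because $B$ is cocoercive hence monotone and Lipschitz, so weak-to-weak sequentially continuous on the relevant set; therefore the left side tends weakly to $-B\overline x+B\overline x$... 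I must be careful here: the cleanest route uses that the graph of the maximally monotone operator $A+B$ is sequentially closed in $\HH^{\mathrm{weak}}\times\HH^{\mathrm{strong}}$, so I would show the right-hand vectors converge strongly to $0$ and $u_n\weakly\overline x$, concluding $0\in(A+B)\overline x$. The main obstacle is precisely this demiclosedness argument — establishing that $Bx_{k_n}$ converges weakly to $B\overline x$ (or circumventing it, e.g. via part~\ref{p:fbii} along the subsequence, which forces $Bx_{k_n}\to Bx$ strongly whenever $\overline x\in\zer(A+B)$, but one cannot assume that a priori) and that the perturbation terms vanish strongly. A robust way is: pass to a further subsequence, use $\sum\|Bx_n-Bx\|^2<\pinf$ for a fixed reference point $x\in\zer(A+B)$ to deduce $Bx_n-Bx\to 0$, hence $Bx_{k_n}$ converges strongly (to $Bx$), then feed $(u_n,\text{the inclusion vector})$ into the graph-closedness of $A+B$ to get $\overline x\in\zer(A+B)$. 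Finally, part~\ref{p:fbiv} follows: under (c) apply Corollary~\ref{c:1}\ref{c:1iii}; under (a) or (b), from $x_n\weakly\overline x\in\zer(A+B)$ and $Bx_n\to B\overline x$ (part~\ref{p:fbii}) one gets, in case (b), $x_n\to\overline x$ directly by demiregularity of $B$; in case (a), one shows $u_n\to\overline x$ via demiregularity of $A$ applied to $(u_n,\gamma_n^{-1}U_n^{-1}(x_n-u_n)-Bx_n)\in\gra A$ with second coordinate converging strongly to $-B\overline x\in A\overline x$, and then $x_n\to\overline x$ since $x_n-u_n\to 0$.
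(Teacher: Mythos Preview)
Your proposal is correct and follows the paper's approach exactly: specialize Corollary~\ref{c:1} with $T_{1,n}=J_{\gamma_nU_nA}$, $T_{2,n}=\Id-\gamma_nU_nB$, $e_{1,n}=a_n$, $e_{2,n}=-\gamma_nU_nb_n$, verify the averagedness constants via Proposition~\ref{p:fbop}, match the $\lambda_n$--range, and identify $S=\zer(A+B)$; then \ref{p:fbi}--\ref{p:fbii} and \ref{p:fbiv} are read off just as you describe (the paper defers \ref{p:fbiv} to \cite[Proposition~4.4(iv)]{Comb15}, which is the argument you wrote out). For \ref{p:fbiii} the paper does precisely your ``robust way'': use \ref{p:fbii} to obtain $Bx_n\to Bz$ strongly for a fixed $z\in\zer(A+B)$, observe $u_n\in\gra A$ with second coordinate converging strongly, and conclude via a demiclosedness lemma (\cite[Lemma~4.5(ii)]{Siopt17}) equivalent to the weak--strong closedness of $\gra(A+B)$ that you invoke. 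One small clean-up: your interim claim that a Lipschitz map is weak-to-weak sequentially continuous is false in infinite dimensions (you rightly back off), and in fact you do not need \ref{p:fbii} at all for \ref{p:fbiii}, since the element $\gamma_{k_n}^{-1}U_{k_n}^{-1}(x_{k_n}-u_n)+(Bu_n-Bx_{k_n})\in(A+B)u_n$ already converges strongly to $0$ by the bounds you noted, so weak--strong closedness of $\gra(A+B)$ applies directly.
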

\begin{proof}
We apply Corollary~\ref{c:1}. Set 
\begin{equation}
(\forall n\in\NN)\;\;
\label{e:pfb_1}
T_{1,n}=J_{\gamma_nU_n A},\quad T_{2,n}=\Id-\gamma_nU_n B,\quad 
e_{1,n}=a_n,\quad\text{and}\quad e_{2,n}=-\gamma_nU_nb_n.
\end{equation}
Then, for every $n\in\NN$, $T_{1,n}$ is $\alpha_{1,n}$-averaged
on $(\HH,U_n^{-1})$ with $\alpha_{1,n}=1/2$ 
and $T_{2,n}$ is $\alpha_{2,n}$-averaged on $(\HH,U_n^{-1})$ with 
$\alpha_{2,n}=\gamma_n\|U_n\|/(2\beta)$ by Proposition~\ref{p:fbop}.
Moreover, for every $n\in\NN$,  
\begin{equation}
\label{e:fb2}
\phi_n=\dfrac{\alpha_{1,n}+\alpha_{2,n}-2\alpha_{1,n}\alpha_{2,n}}
{1-\alpha_{1,n}\alpha_{2,n}}=\dfrac{2\beta}{4\beta-\gamma_n\|U_n\|} 
\end{equation}
and, therefore, \eqref{e:gamlam} yields
\begin{equation}
\label{e:fb3}
\lambda_n\in\left[\varepsilon,\varepsilon+\dfrac{1-\varepsilon}{\phi_n}\right].
\end{equation}
Hence, we derive from \eqref{e:fb2} and \eqref{e:fb3} that 
$(\forall n\in\NN)$ $\lambda_n\leq 2+\varepsilon$. Consequently,
\begin{equation}
\begin{cases}
\sum_{n\in\NN}\lambda_n\|e_{1,n}\|=(2+\varepsilon)
\sum_{n\in\NN}\|a_n\|<\pinf\\
\sum_{n\in\NN}\lambda_n\|e_{2,n}\|\leq
2\beta(2+\varepsilon)\mu\alpha^{-1}\sum_{n\in\NN}\|b_n\|<\pinf. 
\end{cases}
\end{equation}
Furthermore, it follows from \cite[Proposition~26.1(iv)]{Livre1} that 
\begin{equation}
(\forall n\in\NN)\quad S=\zer(A+B)=\Fix(T_{1,n} T_{2,n})\neq\emp.
\end{equation}
Hence, the assumptions of Corollary~\ref{c:1} are satisfied.

\ref{p:fbi}: This is a consequence of 
Corollary~\ref{c:1}\ref{c:1i0} and
\eqref{e:pfb_1}.

\ref{p:fbii}: 
Corollary~\ref{c:1}\ref{c:1i}, \eqref{e:pfb_1}, and
Lemma~\ref{l:vmmat}\ref{l:vmmat-iii} yield 
\begin{align}
\sum_{n\in\NN}\|Bx_n-Bx\|^2
&=\sum_{n\in\NN}\gamma_n^{-2}\|U_n^{-1}(T_{2,n}x_n-x_n-T_{2,n}x+x)\|^2
\nonumber\\
&\leq\dfrac{1}{\varepsilon^{2}\alpha^2}\sum_{n\in\NN}\|T_{2,n}x_n-x_n-T_{2,n}x+x\|^2
\nonumber\\
&<\pinf.
\end{align}

\ref{p:fbiii}: 
Let $(k_n)_{n\in\NN}$ be a strictly
increasing sequence in $\NN$ and let $y\in\HH$ be such that 
$x_{k_n}\weakly y$. 
In view of Corollary~\ref{c:1}\ref{c:1iii}, 
it remains to show that $y\in\zer(A+B)$. 
Set
\begin{equation}
(\forall n\in\NN)\quad
\begin{cases}
y_n=J_{\gamma_n U_nA}(x_n-\gamma_nU_nBx_n)\\
u_n=\gamma_n^{-1}U_n^{-1}(x_n-y_n)-Bx_n\\
v_n = Bx_n
\end{cases}
\end{equation}
and let $z\in\zer(A+B)$.  Hence, we derive from \ref{p:fbi} that $y_n-x_n\to 0$.
Then $y_{k_n}\weakly y$ and by \ref{p:fbii} $Bx_{n}\to Bz$. 
Altogether, $y_{k_n}\weakly y$, $v_{k_n}\weakly Bz$,
$y_{k_n}-x_{k_n}\to 0$,
$u_{k_n}+v_{k_n}\to 0$, and, for every $n\in\NN$,
$u_{k_n}\in Ay_{k_n}$ and
$v_{k_n}\in Bx_{k_n}$.
It therefore follows from \cite[Lemma~4.5(ii)]{Siopt17} 
that $y\in\zer(A+B)$.

\ref{p:fbiv}: The proof is the same that in
\cite[Proposition~4.4(iv)]{Comb15}.
\end{proof}

\begin{remark}
Suppose that $(\forall n\in\NN)$ $U_n=\Id$ and 
$\lambda_n\leq (1-\varepsilon)(1/\phi_n+\varepsilon)$. Then
Proposition~\ref{p:fb} captures \cite[Proposition~4.4]{Comb15}. 
Now suppose that $(\forall n\in\NN)$
$\lambda_n\leq 1$. Then Proposition~\ref{p:fb} captures 
\cite[Theorem~4.1]{Comb14}.
\end{remark}

Using the averaged operators framework allows us to obtain
an extended forward-backward splitting algorithm in Euclidean spaces. 

\begin{example}
	Let $\alpha\in\RPP$, let $(\eta_n)_{n\in\NN}\in\ell^1_+(\NN)$, 
and let $(U_n)_{n\in\NN}$ be a sequence in $\mathcal{P}_\alpha(\HH)$ 
such that
\begin{equation}
\mu=\sup_{n\in\NN}\|U_n\|<\pinf\quad\text{and}\quad
(\forall n\in\NN)\quad
(1+\eta_n)U_{n+1}\succcurlyeq U_n.
\end{equation}
Let $\varepsilon\in\left]0,1/2\right[$,
let $A\colon \HH\to 2^\HH$ be a maximally monotone operator, let
$\beta\in\RPP$, let $B$ a $\beta$-cocoercive operator,
and let
$(\gamma_n)_{n\in\NN}$ and $(\mu_n)_{n\in\NN}$ be sequences in
$[\varepsilon,\pinf[$ 
such that
\begin{equation}
	\label{h:e11}
\phi_n=\dfrac{2\mu_n\beta}{4\beta-\|U_n\|\gamma_n}\leq 1-\varepsilon.
\end{equation}
Let $x_0\in\HH$ and iterate
\begin{equation}
\label{}
(\forall n\in\NN)\quad x_{n+1}=x_n+\mu_n\Big(
J_{\gamma_n U_nA}(x_n-\gamma_nU_nB)-x_n\Big).
\end{equation}
Suppose that $\HH$ is finite-dimensional and that $\zer(A+B)\neq\emp$.
Then $(x_n)_{n\in\NN}$ converges to a point in $\zer(A+B)$.
\end{example}
\begin{proof}
Set $(\forall n\in\NN)$ 
$T_n=\Id+\mu_n(J_{\gamma_n U_nA}(\Id-\gamma_n U_n B)-\Id)$. Remark
that, for every $n\in\NN$, $T_n$ is $\phi_n$-averaged. 
Hence we apply Theorem~\ref{t:1} with $m=1$ and $\lambda\equiv 1$. 
It follows from Theorem~\ref{t:1}\ref{t:1ii} and \eqref{h:e11}
that $T_nx_n-x_n\to 0$. Since $\HH$ is finite-dimensional,
the claim follows from
Theorem~\ref{t:1}\ref{t:1iv}.
\end{proof}

\begin{remark}
An underrelaxation or an appropriate choice of the metric of 
the algorithm allows us to exceed 
the classical bound $2/\beta$ for $(\gamma_n)_{n\in\NN}$.
For instance, the parameters $\gamma_n\equiv 2.99/\beta$,
$\mu_n\equiv 1/2$, and $U_n\equiv\Id$ satisfy the
assumptions.
\end{remark}

\section{A composite monotone inclusion problem}
\label{sec:composite}

We study the composite monotone inclusion presented in \cite{Comb12}.

\begin{problem}
\label{prob:1}
Let $z\in\HH$, let $A\colon\HH\to 2^{\HH}$ be maximally monotone, 
let $\mu\in\RPP$, let $C\colon\HH\to\HH$ be $\mu$-cocoercive, 
and let $m$ be a strictly positive integer.
For every $i\in\{1,\ldots, m\}$, let $r_i\in\GG_i$, 
let $B_i\colon \GG_i\to2^{\GG_i}$ be maximally monotone, 
let $\nu_i\in\RPP$,
let $D_i\colon \GG_i \to 2^{\GG_i}$ be maximally monotone and
$\nu_i$-strongly monotone,
and suppose that $0\neq L_i\in \BL(\HH,\GG_i)$. 
The problem is to find $\overline{x}\in\HH$ such that
\begin{equation}
\label{e:prob11}
z\in A\overline{x}+\sum_{i=1}^mL^{*}_i
\big((B_i\infconv D_i)(L_i\overline{x}-r_i)\big)+C\overline{x},
\end{equation}
the dual problem of which is to find
$\overline{v}_1\in\GG_1,\ldots,
\overline{v}_m\in\GG_m$ such that
\begin{equation}
\label{e:prob12}
(\exi x\in\HH)\quad
\begin{cases}
z-\sum_{i=1}^m L_{i}^*\overline{v}_i \in Ax+Cx\\
(\forall i\in\{1,\ldots,m\})\;\overline{v}_i\in 
(B_i\infconv D_i)(L_ix-r_i).
\end{cases}
\end{equation}
\end{problem}

The following corollary is an overrelaxed version of
\cite[Corollary~6.2]{Comb14}.
\begin{corollary}
\label{c:fbpd}
In Problem~\ref{prob:1}, suppose that 
\begin{equation}
z\in\ran
\bigg(A+\sum_{i=1}^mL^{*}_i
\big((B_i\infconv D_i)(L_i\cdot-r_i)\big)+C\bigg),
\end{equation}
and set
\begin{equation}
\beta=\min\{\mu,\nu_1,\ldots,\nu_m\}.
\end{equation}
Let $\varepsilon\in\left]0,\min\{1,\beta\}\right[$,
let $\alpha\in\RPP$, let $(\lambda_n)_{n\in\NN}$ be a sequence in 
$\RPP$, let $x_0\in\HH$, let 
$(a_n)_{n\in\NN}$ and $(c_n)_{n\in\NN}$ be absolutely summable 
sequences in $\HH$, and let $(U_n)_{n\in\NN}$ be a sequence 
in $\BP_{\alpha}(\HH)$ such that 
$(\forall n\in\NN)\; U_{n+1}\succcurlyeq U_n$.
For every $i\in\{1,\ldots,m\}$, let
$v_{i,0}\in\GG_i$, and let $(b_{i,n})_{n\in\NN}$ 
and $(d_{i,n})_{n\in\NN}$ be  
absolutely summable sequences in $\GG_i$, and let
$(U_{i,n})_{n\in\NN}$ be a sequence in 
$\BP_{\alpha}(\GG_i)$ such that
$(\forall n\in\NN)$ $U_{i,n+1}\succcurlyeq U_{i,n}$.
For every $n\in\NN$, set
\begin{equation}
\delta_n=
\Bigg(\sqrt{\sum_{i= 1}^m\|\sqrt{U_{i,n}} 
L_i\sqrt{U_n}\|^2}\Bigg)^{-1}-1,
\end{equation}
suppose that
\begin{equation}
\zeta_n=\dfrac{1+\delta_n}
{(1+\delta_n)\max\{\|U_n\|,\|U_{1,n}\|,\ldots,\|U_{m,n}\|\}}
\geq\dfrac{1}{2\beta-\varepsilon},
\end{equation}
and let
\begin{equation}
\lambda_n\in\left[\varepsilon,1+(1-\varepsilon)
\bigg(1-\dfrac{1}{2\zeta_n\beta}\bigg)\right].
\end{equation}
Iterate
\begin{equation}
	\label{e:fbpdalgo}
\begin{array}{l}
\text{for}\;n=0,1,\ldots\\
\left\lfloor
\begin{array}{l}
p_n=J_{U_nA}\Big(x_n-U_n
\big(\sum_{i=1}^mL_{i}^*v_{i,n}+C x_n+c_n-z\big)\Big)+a_n\\
y_n=2p_n-x_n\\
x_{n+1}=x_n+\lambda_n(p_n-x_n)\\
\operatorname{for}\;i=1,\ldots, m\\
\left\lfloor
\begin{array}{l}
q_{i,n}=J_{U_{i,n}B_{i}^{-1}}
\Big(v_{i,n}+U_{i,n}\big(L_iy_n- D_{i}^{-1}v_{i,n} 
-d_{i,n}-r_i\big)\Big)+b_{i,n}\\
v_{i,n+1}=v_{i,n}+\lambda_n(q_{i,n}-v_{i,n}).\\
\end{array}
\right.\\[2mm]
\end{array}
\right.\\[2mm]
\end{array}
\end{equation}
Then the following hold for some solution $\overline{x}$ to 
\eqref{e:prob11} and some solution
$(\overline{v}_1,\ldots,\overline{v}_m)$ to \eqref{e:prob12}:
\begin{enumerate}
\item
\label{c:fbpdi}
$x_n\weakly\overline{x}$.
\item
\label{c:fbpdii}
$(\forall i\in\{1,\ldots,m\})$ $v_{i,n}\weakly\overline{v}_i$. 
\item
\label{c:fbpdiii}
Suppose that $C$ is demiregular at $\overline{x}$.
Then $x_n\to\overline{x}$.
\item
\label{c:fbpdiv}
Suppose that, for some $j\in\{1,\ldots,m\}$, $D_j^{-1}$ is 
demiregular at $\overline{v}_j$. Then 
$v_{j,n}\to\overline{v}_j$.
\end{enumerate}
\end{corollary}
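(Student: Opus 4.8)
The plan is to derive Corollary~\ref{c:fbpd} from Proposition~\ref{p:fb} by the standard primal-dual product-space reformulation of Problem~\ref{prob:1}, following the blueprint of \cite[Corollary~6.2]{Comb14} but now exploiting the overrelaxation allowed in Proposition~\ref{p:fb}. First I would introduce the Hilbert space $\boldsymbol{\mathcal K}=\HH\oplus\GG_1\oplus\cdots\oplus\GG_m$ with a suitable metric, and on it define the operator $\boldsymbol A\colon(x,v_1,\ldots,v_m)\mapsto(-z+Ax)\times(r_1+B_1^{-1}v_1)\times\cdots\times(r_m+B_m^{-1}v_m)$ and the skew linear operator $\boldsymbol S\colon(x,v_1,\ldots,v_m)\mapsto(\sum_i L_i^*v_i,-L_1x,\ldots,-L_mx)$, and the cocoercive part $\boldsymbol B\colon(x,v_1,\ldots,v_m)\mapsto(Cx,D_1^{-1}v_1,\ldots,D_m^{-1}v_m)$. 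One checks that $\boldsymbol A+\boldsymbol S$ is maximally monotone (sum of a maximally monotone and a skew bounded linear operator, \cite[Proposition~26.1]{Livre1} or \cite[Corollary~25.5]{Livre1}), that $\boldsymbol B$ is $\beta$-cocoercive with $\beta=\min\{\mu,\nu_1,\ldots,\nu_m\}$, and — the crucial point — that the zeros of $\boldsymbol A+\boldsymbol S+\boldsymbol B$ are exactly the primal-dual solutions $(\overline{x},\overline{v}_1,\ldots,\overline{v}_m)$ of \eqref{e:prob11}--\eqref{e:prob12}; this is a by-now-classical Kuhn--Tucker computation using the parallel-sum identity $B_i\infconv D_i=(B_i^{-1}+D_i^{-1})^{-1}$.

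Next I would package the metric. Set $\boldsymbol U_n\colon(x,v_1,\ldots,v_m)\mapsto(U_n^{-1}x-\ldots,\ldots)$ — more precisely, following \cite{Comb14}, take the block operator whose action incorporates both the individual $U_n,U_{i,n}$ and the coupling term, so that the forward-backward step on $\boldsymbol{\mathcal K}$ in the metric induced by $\boldsymbol U_n^{-1}$ decouples into exactly the updates for $p_n$ and $q_{i,n}$ in \eqref{e:fbpdalgo}. The quantity $\delta_n$ and the definition $\zeta_n$ are designed precisely so that $\boldsymbol U_n\in\BP_{\alpha'}(\boldsymbol{\mathcal K})$ for a suitable $\alpha'>0$ with $\|\boldsymbol U_n\|\le 1/\zeta_n$ and $\boldsymbol U_{n+1}^{-1}\preccurlyeq\boldsymbol U_n^{-1}$ (from $U_{n+1}\succcurlyeq U_n$, $U_{i,n+1}\succcurlyeq U_{i,n}$ and monotonicity of $\delta_n$); the bound $\zeta_n\ge 1/(2\beta-\varepsilon)$ then translates into $\gamma_n\|\boldsymbol U_n\|/(2\beta)\le$ an admissible value, so that the constraint \eqref{e:gamlam} of Proposition~\ref{p:fb} holds with $\gamma_n\equiv 1$ and with the very interval $[\varepsilon,1+(1-\varepsilon)(1-1/(2\zeta_n\beta))]$ imposed on $\lambda_n$. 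Here one invokes the Hilbert-space resolvent identity to see that $J_{\boldsymbol U_n(\boldsymbol A+\boldsymbol S)}$ splits into the stated resolvents of $A$ and $B_i^{-1}$ — the $y_n=2p_n-x_n$ line is the trace of the skew coupling, exactly as in the Vũ/Condat and Combettes--Vũ derivations.

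Having set this up, I would apply Proposition~\ref{p:fb} on $\boldsymbol{\mathcal K}$ with $\boldsymbol A+\boldsymbol S$ in place of $A$, $\boldsymbol B$ in place of $B$, $\gamma_n\equiv 1$, metric sequence $(\boldsymbol U_n)_{n\in\NN}$, error sequences assembled from $(a_n),(c_n),(b_{i,n}),(d_{i,n})$ (absolutely summable, whence summable in the $\boldsymbol U_n^{-1}$-norm by the uniform bound on $\|\boldsymbol U_n\|$ and Lemma~\ref{l:vmmat}), and initial point $(x_0,v_{1,0},\ldots,v_{m,0})$. Proposition~\ref{p:fb}\ref{p:fbiii} gives weak convergence of the full sequence $(x_n,v_{1,n},\ldots,v_{m,n})$ to a zero of $\boldsymbol A+\boldsymbol S+\boldsymbol B$, which by the Kuhn--Tucker step yields \ref{c:fbpdi} and \ref{c:fbpdii}. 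For \ref{c:fbpdiii} and \ref{c:fbpdiv}, I would observe that demiregularity of $C$ at $\overline{x}$ (resp.\ of $D_j^{-1}$ at $\overline{v}_j$) makes the corresponding block of $\boldsymbol B$ demiregular at the limit, and combine this with Proposition~\ref{p:fb}\ref{p:fbii} (which gives $\boldsymbol B\boldsymbol x_n\to\boldsymbol B\overline{\boldsymbol x}$, i.e.\ $Cx_n\to C\overline{x}$ and $D_i^{-1}v_{i,n}\to D_i^{-1}\overline{v}_i$) to force strong convergence of that block via Definition~\ref{d:demireg}; the argument is identical to \cite[Proposition~4.4(iv)]{Comb15} / \cite[Corollary~6.2]{Comb14}.

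The main obstacle is the bookkeeping of the metric: verifying that the composite operator $\boldsymbol U_n$ built from $(U_n,U_{1,n},\ldots,U_{m,n})$ and $\delta_n$ is indeed in $\BP_{\alpha'}(\boldsymbol{\mathcal K})$ with the claimed norm bound $\|\boldsymbol U_n\|\le 1/\zeta_n$ and the claimed Loewner monotonicity $\boldsymbol U_{n+1}^{-1}\preccurlyeq\boldsymbol U_n^{-1}$, and that with these bounds the admissible-step condition of Proposition~\ref{p:fb} collapses to exactly \eqref{h:e11}-type inequalities and the stated range for $\lambda_n$. This is a spectral estimate on a $2\times 2$-block-structured positive operator controlled by $\sum_i\|\sqrt{U_{i,n}}L_i\sqrt{U_n}\|^2$; it is routine but delicate, and it is where the definition of $\delta_n$ earns its keep. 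Everything downstream is a transcription of the single-operator forward-backward result through the product-space dictionary.
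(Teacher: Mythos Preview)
Your proposal is correct and follows essentially the same route as the paper: both lift Problem~\ref{prob:1} to the product space $\KKK=\HH\oplus\GG_1\oplus\cdots\oplus\GG_m$, identify the iteration \eqref{e:fbpdalgo} as an instance of the variable metric forward-backward scheme \eqref{e:fb} with $\gamma_n\equiv 1$ and metric $\widetilde V_n^{-1}$ built from the $U_n,U_{i,n}$ and the coupling (citing \cite[Corollary~6.2]{Comb14} for the bookkeeping that $\|\widetilde V_n^{-1}\|\le\zeta_n^{-1}$ and $\widetilde V_{n+1}^{-1}\succcurlyeq\widetilde V_n^{-1}$), and then read off \ref{c:fbpdi}--\ref{c:fbpdii} from Proposition~\ref{p:fb}\ref{p:fbiii} and \ref{c:fbpdiii}--\ref{c:fbpdiv} from Proposition~\ref{p:fb}\ref{p:fbii} combined with Definition~\ref{d:demireg} applied componentwise. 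The only cosmetic difference is that you write the maximally monotone part as $\boldsymbol A+\boldsymbol S$ with the skew term separated, whereas the paper bundles the skew coupling directly into its $\widetilde A$; this changes nothing.
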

\begin{proof} 
Set
$\GGG=\GG_1\oplus\cdots\oplus\GG_m$,
$\KKK=\HH\oplus\GGG$, and
\begin{equation}
\begin{cases}
\label{e:fbpdmm}
\widetilde{A}\hskip -3mm&\colon\KKK\to 2^{\KKK}\colon
(x,v_1,\ldots,v_m)\mapsto (\sum_{i=1}^mL_{i}^*v_i-z+Ax)\\
&\hskip +30mm\times(r_1-L_1x+B_{1}^{-1}v_1)\times\cdots\times
(r_m-L_mx+B^{-1}_{m}v_m)\\
\widetilde{B}\hskip -3mm&\colon\KKK\to\KKK\colon
(x,v_1,\ldots,v_m)\mapsto 
\big(Cx,D^{-1}_1v_1,\ldots,D^{-1}_mv_m\big)\\[2mm]
\widetilde{S}\hskip -3mm&\colon\KKK\to \KKK\colon
(x,v_1,\ldots,v_m)\mapsto
\bigg(\sum_{i=1}^mL_{i}^*v_i,-L_1x,\ldots,-L_mx\bigg).
\end{cases}
\end{equation}
Now, for every $n\in\NN$, define
\begin{equation}
\begin{cases}
\widetilde{U}_n\colon\KKK\to\KKK\colon
(x,v_1,\ldots, v_m)
\mapsto\Big(U_nx,U_{1,n}v_1,\ldots,U_{m,n}v_m\Big)\\[2mm]
\widetilde{V}_n\colon\KKK\to \KKK\colon\\
\quad\quad(x,v_1,\ldots,v_m)\mapsto 
\bigg(U_n^{-1}x-\sum_{i=1}^m L^{*}_iv_i,\big(-L_ix+ 
U_{i,n}^{-1}v_i\big)_{1\leqslant i\leqslant m}\bigg)
\end{cases}
\end{equation}
and 
\begin{equation}
\label{e:fbpd_2}
\begin{cases}
{\widetilde x}_n=(x_n,v_{1,n},\ldots,v_{m,n})\\
{\widetilde y}_n=(p_n,q_{1,n},\ldots,q_{m,n})\\
\widetilde{a}_n=(a_n,b_{1,n},\ldots,b_{m,n})\\
{\widetilde c}_n=(c_n,d_{1,n},\ldots,d_{m,n})\\
{\widetilde d}_n=(U_n^{-1}a_n,
U_{1,n}^{-1}b_{1,n},\ldots, 
U_{m,n}^{-1}b_{m,n})
\end{cases}
\quad\text{and}\quad
\widetilde{b}_n=(\widetilde{S}
+\widetilde{V}_n)\widetilde{a}_n+{\widetilde c}_n
-{\widetilde d}_n.
\end{equation}
It follows from the proof of \cite[Corollary~6.2]{Comb14} that
\eqref{e:fbpdalgo} is equivalent to
\begin{align}
\label{e:fbpd_3}
(\forall n\in\NN)\quad
{\widetilde x}_{n+1}
&={\widetilde x}_n+\lambda_n\Big(J_{{\widetilde V}_n^{-1}
\widetilde{A}}\big({\widetilde x}_n-
{\widetilde V}_{n}^{-1}(\widetilde{B}{\widetilde x}_n+
\widetilde{b}_n)\big)+\widetilde{a}_n-
{\widetilde x}_n\Big),
\end{align}
that the operators $\widetilde{A}$ and $\widetilde{B}$ are 
maximally monotone, and 
$\widetilde{B}$ is $\beta$-cocoercive on $\HHH$.
Furthermore, for every $(\overline{x},\overline{v}) 
\in\zer(\widetilde{A}+\widetilde{B})$, 
$\overline{x}$ solves \eqref{e:prob11} and 
$\overline{v}$ solves \eqref{e:prob12}.
Now set $\rho=1/\alpha+\sqrt{\sum_{i=1}^m\|L_i\|^2}$.
We deduce from the proof of \cite[Corollary~6.2]{Comb14} that
$(\forall n\in\NN)$ $\|\widetilde{V}_{n}^{-1}\|
\leq\zeta_n^{-1}\leq 2\beta-\varepsilon$ and 
$\widetilde{V}_{n+1}^{-1}\succcurlyeq\widetilde{V}_n^{-1}
\in{\EuScript P}_{1/\rho}(\KKK)$.
We observe that \eqref{e:fbpd_3} has the structure of the 
variable metric forward-backward splitting algorithm 
\eqref{e:fb} and that all the conditions of Proposition~\ref{p:fb}
are satisfied. 

\ref{c:fbpdi}\&\ref{c:fbpdii}:
Proposition~\ref{p:fb}\ref{p:fbiii} asserts that there exists
\begin{equation}
\label{e:fbpd_4}
\overline{{\widetilde x}}=(\overline{x},\overline{v}_1,\ldots,
\overline{v}_m)\in\zer(\widetilde{A}+\widetilde{B}) 
\end{equation}
such that ${\widetilde x}_n\weakly\overline{{\widetilde x}}$.

\ref{c:fbpdiii}\&\ref{c:fbpdiv}:
It follows from Proposition~\ref{p:fb}\ref{p:fbii}
that $\widetilde{B}{\widetilde x}_n\to\widetilde{B}
\overline{{\widetilde x}}$. Hence, \eqref{e:fbpdmm},
\eqref{e:fbpd_2}, and \eqref{e:fbpd_4} yield 
\begin{equation}
Cx_n\to C\overline{x} \quad\text{and}\quad 
\big(\forall i\in\{1,\ldots,m\}\big)\quad
D_{i}^{-1}v_{i,n}\to D_{i}^{-1}\overline{v}_i. 
\end{equation}
We derive the results from Definition~\ref{d:demireg} and
\ref{c:fbpdi}--\ref{c:fbpdii} above.
\end{proof}
\begin{remark}
Suppose that $(\forall n\in\NN)$
$\lambda_n\leq 1$. Then Corollary~\ref{c:fbpd} captures 
\cite[Corollary~6.2]{Comb13}.
\end{remark}

{\bf Acknowledgement.} The author thanks his Ph.D. advisor P. L.
Combettes for his guidance during this work, which is part of his
Ph.D. dissertation.


\begin{thebibliography}{10}

\bibitem{Alot14}
Alotaibi, A., Combettes, P.L., and Shahzad, N.: {Solving coupled composite
  monotone inclusions by successive {F}ej{\'e}r approximations of their
  {K}uhn-{T}ucker set.} SIAM J. Optim. \textbf{24}, 2076--2095 (2014)

\bibitem{Atto10}
Attouch, H., Brice{\~n}o-Arias, L.M., and Combettes, P.L.: {A parallel splitting
  method for coupled monotone inclusions.} SIAM J. Control Optim. \textbf{48},
  3246--3270 (2010)

\bibitem{Bail78} 
Baillon, J.-B.,  Bruck, R.E., and Reich, S.:
On the asymptotic behavior of nonexpansive mappings and 
semigroups,
{\rm Houston J. Math.}, 
\textbf{4}, 1--9 (1978)

\bibitem{Livre1}
Bauschke, H.H. and Combettes, P.L.: \emph{{Convex Analysis and Monotone Operator
  Theory in {H}ilbert Spaces}.} 2nd ed. Springer, New York (2017)

\bibitem{Brez73}
Br{\'e}zis, H.: \emph{{Op{\'e}rateurs Maximaux Monotones et Semi-groupes de
  Contractions dans les Espaces de {H}ilbert}.} North-Holland/Elsevier, New York
  (1973)

\bibitem{Bric11}
Brice{\~n}o-Arias, L.M. and Combettes, P.L.: {A monotone + skew splitting model
  for composite monotone inclusions in duality.} SIAM J. Optim. \textbf{21},
  1230--1250 (2011)

\bibitem{Cegi12} 
Cegielski, A., 
\emph{Iterative Methods for Fixed Point Problems in Hilbert Spaces,}
Lecture Notes in Mathematics, \textbf{2057}. Springer, Heidelberg
(2012)

\bibitem{Chamb11}
Chambolle, A. and Pock, T.: {A first-order primal-dual algorithm for convex
  problems with applications to imaging.} J. Math. Imaging Vision \textbf{40},
  120--145 (2011)

\bibitem{Comb04}
Combettes, P.L.: {Solving monotone inclusions via compositions of nonexpansive
  averaged operators.} Optimization \textbf{53}, 475--504 (2004)
 
\bibitem{Comb18_2}
Combettes, P.L.: {Monotoned operator theory in convex optimization.} Math.
Program. \textbf{170}, 177--206 (2018)

\bibitem{Siopt17}
Combettes, P.L. and Glaudin, L.E.: {Quasi-nonexpansive iterations on the affine
  hull of orbits: from {M}ann's mean value algorithm to inertial methods.} SIAM
  J. Optim. \textbf{27}, 2356--2380 (2017)

\bibitem{Comb18}
Combettes, P.L. and M{\"u}ller, C.L.: {Perspective functions: proximal calculus
  and applications in high-dimensional statistics.} J. Math. Anal. Appl.
  \textbf{457}, 1283--1306 (2018)

\bibitem{Comb11}
Combettes, P.L. and Pesquet, J.-C.: {Proximal splitting methods in
signal processing,} in Fixed-Point Algorithms for Inverse Problems
in Science and Engineering, 185--212. Springer, New York (2011)

\bibitem{Comb12}
Combettes, P.L. and Pesquet, J.-C.: {Primal-dual splitting algorithm
for solving inclusions with mixtures of composite, {L}ipschitzian,
and parallel-sum type monotone operators.} Set-Valued Var. Anal.
\textbf{20}, 307--330 (2012)

\bibitem{Comb13}
Combettes, P.L. and V\~u, B.C: {Variable metric quasi-{F}ej{\'e}r
monotonicity.}
  Nonlinear Anal. \textbf{78}, 17--31 (2013)

\bibitem{Comb14}
Combettes, P.L. and V\~u, B.C.: 
{Variable metric forward-backward splitting with applications to
monotone inclusions in duality.} Optimization
\textbf{63}, 1289--1318 (2014)

\bibitem{Comb05}
Combettes, P.L., Wajs, V.R.:
{Signal recovery by proximal forward-backward splitting.}, 
Multiscale Model. Simul. \textbf{4}, 1168--1200 (2005) 

\bibitem{Comb15}
Combettes, P.L. and Yamada, I.: {Compositions and convex combinations of
  averaged nonexpansive operators.} J. Math. Anal. Appl. \textbf{425}, 55--70
  (2015)

\bibitem{Duch09}
Duchi, J. and Singer, Y.: {Efficient online and batch learning using forward
  backward splitting.} J. Mach. Learn. Res. \textbf{10}, 2899--2934 (2009)

\bibitem{Jena11}
Jenatton, R., Mairal, J., Obozinski, G., and Bach, F.: {Proximal methods for
  hierarchical sparse coding.} J. Mach. Learn. Res. \textbf{12}, 2297--2334
  (2011)
 
\bibitem{Peyp10}
Peypouquet, J. and Sorin, S.:
{Evolution equations for maximal monotone operators: asymptotic
analysis in continuous and discrete time.} J. Convex Anal.
\textbf{17}, 1113--1163 (2010)

\bibitem{Salz17}
Salzo, S.: {The variable metric forward-backward splitting
algorithm under mild differentiability assumptions.} SIAM J. Optim.
\textbf{27}, 2153--2181 (2017) 

\bibitem{Show97}
Showalter, R.E.: \emph{{Monotone Operators in {B}anach Space and Nonlinear
  Partial Differential Equations.}} American Mathematical Society, Providence, RI (1997)

\bibitem{Vu13}
V\~u, B.C.: {A splitting algorithm for dual monotone inclusions involving
  cocoercive operators.} Adv. Comput. Math. \textbf{38}, 667--681 (2013)

\end{thebibliography}
\end{document}